\newtheorem{theorem}{Theorem}[section]
\newtheorem{lemma}[theorem]{Lemma}
\newtheorem{remark}{Remark}[section]
\numberwithin{equation}{section}
\title{Turnpike property for hierarchical optimal control problems: from particle systems to hydrodynamic equations}
\author{
Michael Herty \thanks{Chair in Numerical Analysis, IGPM, RWTH Aachen University, Templergraben, 55, D-52062 Aachen, Germany (herty@igpm.rwth-aachen.de)}\qquad
Yizhou Zhou\thanks{Corresponding author, IGPM, RWTH Aachen University, Templergraben, 55, D-52062 Aachen, Germany (zhou@igpm.rwth-aachen.de)} 
}
\date{\today}
\begin{document}
\maketitle{}

\begin{abstract}
This work is concerned with a hierarchical framework of optimal control problems connecting interacting particle systems, the mean field limit equations, and associated hydrodynamic models. By assuming the existence of solutions, we establish the exponential turnpike property for each level of the hierarchy, showing that optimal trajectories remain close to the associated steady states over long time horizons. The results demonstrate that the exponential turnpike behavior persists consistently across scales, providing a unified connection between microscopic, kinetic, and macroscopic optimal control frameworks.
\end{abstract}

\hspace{-0.5cm}\textbf{Keywords:}
{\small Exponential turnpike property, particle system, mean-field limit, hydrodynamic limit, optimal control \\}

\hspace{-0.5cm}\textbf{AMS subject classification:} {\small 93C20, 35Q89, 49N10}

\section{Introduction}

Our work investigates the turnpike property in the context of controlled, second-order interacting particle systems and their macroscopic limits. Specifically, we consider time-dependent systems where each particle obeys a controlled ordinary differential equation (ODE), leading to an ensemble of $N$-coupled differential equations. As the number of particles $N$ tends to infinity, the collective behavior is formally described by a mean-field limit equation. For second-order dynamics, this mean-field model can be further approximated by hydrodynamic or fluid-dynamic equations obtained via moment closure. We aim to understand how the turnpike property persists across these different levels of description.

Interacting particle systems have been extensively studied as multiscale models for complex collective dynamics. Such systems appear in various contexts, including swarming and collective motion \cite{DM08a,MT14a,CDP09,AP13,Alb+19,CKPP}, crowd dynamics \cite{ABCK}, traffic management \cite{TZ},  opinion dynamics \cite{AHP}, machine learning frameworks \cite{GKYP24-1,GKYP24-2,RZ23,MMN18,SS20,BT19} and so on.
Recent developments further emphasize the importance of control and learning mechanisms in such systems, leading to the concept of active particles, which connect collective dynamics with control and optimization. Comprehensive accounts of these topics can be found in \cite{MR3642940,MR3969953}. Control problems at the microscopic and mean-field levels have been established in works e.g. \cite{ACFK,CFPR-13,FS,BeBr19,BPTT21,HeRi-CMS19,PRT-2015}. For uncontrolled systems, the hierarchical derivation from microscopic dynamics to kinetic and fluid-dynamic models is well understood \cite{DM08a,FHT-11,HaTadmor,DHL-MMS14,CCH-2014}. However, the extension of control theory to the fluid-dynamic level remains largely unexplored.

We begin with a particle description given by a Cucker-Smale type equation \cite{HaTadmor}
\begin{eqnarray}\label{intro-particle-eq}
\begin{aligned}
    \frac{dx_i}{dt} =  v_i, \qquad
    \frac{dv_i}{dt} =  \frac{1}{N}\sum_{j=1}^N \Psi(x_i,x_j)(v_j-v_i) + f_i(t)
\end{aligned}
\end{eqnarray}
with $i=1,2,...,N$. Here $x_i=x_i(t)\in\mathbb{R}^D$ and $v_i=v_i(t)\in \mathbb{R}^D$ are the spatial position and the velocity of the $i$-th particle, $f_i$ represents the control action. By considering the objective functional
\begin{eqnarray}\label{intro-particle:cost}
\begin{aligned}
    \frac{1}{N}\sum_{i=1}^N\int_{0}^T |v_i(t)-\bar{v}|^2 + \lambda |f_i(t)|^2 dt
\end{aligned}
\end{eqnarray}
with $\lambda$ a weighting parameter and $\bar{v}$ a constant vector, the optimal control problem for the particle systems is formulated. As $N\rightarrow \infty$, the corresponding optimal control problems at the mean field level were well-established in \cite{FS}. Considering the empirical measure 
$\mu_{N}(t,x,v) = \frac{1}{N}\sum_{i=1}^N \delta\left(x-x_i(t)\right)\otimes \delta\left(v-v_i(t)\right)$
with $(x_i,v_i)(t)$ being the solution of \eqref{intro-particle-eq}, the authors in \cite{FS} prove that $\mu_{N}$ converges to the solution of the associated mean field problem
\begin{align}
    &\min_{f\in\mathcal{F}}   \int_{0}^T \int_{\mathbb{R}^{D}}\int_{\mathbb{R}^{D}} |v-\bar{v}|^2d\mu(t,x,v)  + \lambda |f|^2d\mu(t,x,v) ~dx dt, \label{intro-mean-field-pb1} \\[2mm]
    &\partial_t \mu + v\cdot \nabla_x \mu + \nabla_v \cdot ( Q+ f\mu) = 0\label{intro-mean-field-pb2}
\end{align}
with 
$Q = \int_{\mathbb{R}^{D}}\int_{\mathbb{R}^{D}} \Psi(x,y)(v_*-v)\mu(t,x,v)\mu(t,y,v_*)dv_*dy
$
and give the existence of optimal control for both the $N$-particle system and mean field problem.

In the present work, we formally formulate the optimal control problem at the hydrodynamic level. By taking moments in equation \eqref{intro-mean-field-pb2} up to the second order, we formally derive an unclosed hydrodynamic system. To obtain a closed system, we introduce two types of closure ansatz (details can be found in Section \ref{section2.3}). In contrast to the derivation in~\cite{HaTadmor}, our approach additionally accounts for the control terms and the objective functionals. In particular, we observe that the formulation of the objective functional in the optimal control problem is not unique. To address this, we propose a modified objective functional defined according to the upper bound of the cost function in the mean-field problem.

Having hierarchical optimal control
problems from particle systems to hydrodynamic
equations, we further investigate the exponential turnpike property for these problems, which indicates that the optimal solutions remain exponentially close to reference solutions. Here the reference solutions are taken as the optimal solutions to the corresponding static problems. Originally proposed for discrete-time optimal control problems \cite{DSS,Sa}, the turnpike concept has been developed in many directions, leading to a broad range of related results \cite{MR3217211, MR4493557, GeZua2022, receding, MR4402854, MR3470445, MR4079009, use-2, interior-original, main, MR3124890, MR3780457, MR3271298}, to name but a few. 
For first-order systems, it has been shown \cite{main,Herty_Zhou_2025} that the turnpike property prevails under the large particle limit. Therefore, we mainly aim to extend the derived turnpike property for the hydrodynamic limit problems. 

Our main approach is to construct a feedback control that ensures stabilization of the system. Based on this, we establish the cheap control property \cite{main,Herty_Zhou_2025} as well as the exponential turnpike property for the corresponding optimal control problems. All estimates for the particle system are uniform with respect to the number of particles $N$. Thus all results are also expected in the mean-field level as $N\rightarrow \infty$. For both the particle and mean-field settings, the overall procedure closely follows our previous work \cite{Herty_Zhou_2025} on first-order particle systems. For the hydrodynamic problem, we make a strong assumption that the optimal control problem admits a solution with enough regularity. Under this assumption, we design suitable feedback laws and establish both the cheap control property and the exponential turnpike property. According to these results, we observe that the exponential turnpike property persists throughout the hierarchy of optimal control problems, from microscopic level to macroscopic level.

The paper is organized as follows. In Section \ref{Section2}, we introduce the optimal control problems and state basic assumptions for the particle system, the mean-field system, and the hydrodynamic models in Subsections \ref{Section2.1}–\ref{section2.3}, respectively. Sections \ref{Section3} and \ref{Section4} are devoted to the analysis of these problems at all three levels. In Section \ref{Section3}, Subsections \ref{Section3.1}–\ref{Section3.3} address the cheap control property for the three levels from the microscopic to the macroscopic scale. In Section \ref{Section4}, we establish the exponential turnpike property. The main result is presented in Theorem \ref{thm14}. Finally, in Section \ref{Section5}, we illustrate the feedback control laws for the particle systems and hydrodynamic models through numerical experiments.

\section{Hierarchy of control problems}\label{Section2}

In this section, we introduce the hierarchy of optimal control problems and basic assumptions. In Section \ref{Section2.1} and \ref{Section2.2}, we present the optimal control problems for the particle system and its mean field limit. Then in Section \ref{section2.3} we formally derive the corresponding hydrodynamic limit and formulate the optimal control problem in the macroscopic level.

\subsection{Optimal control problem for the particle system}\label{Section2.1}
Consider an $N$-particle interacting system consisting of $i=1,2,\dots,N$ identical particles:
\begin{eqnarray}\label{particle-eq}
\begin{aligned}
    &\frac{dx_i}{dt} =  v_i, \\[2mm]
    &\frac{dv_i}{dt} =  \frac{1}{N}\sum_{j=1}^N \Psi(x_i,x_j)(v_j-v_i) + f_i(t), \\[2mm]
    &x_i(0) = x_{i0},\quad v_i(0) = v_{i0}.
\end{aligned}
\end{eqnarray}
Here $x_i=x_i(t)\in\mathbb{R}^D$ and $v_i=v_i(t)\in \mathbb{R}^D$ represent the spatial position and the velocity of the $i$-th particle. The initial data $(x_{i0}.v_{i0})$ are i.i.d. sampled from a given probability distribution $\mu_{0}(x,v)\in P_2(\mathbb{R}^{2D})$. The interaction kernel $\Psi$ is assumed to be a bi-symmetric and bounded function, i.e., 
\begin{align}
    \Psi(x,y)=\Psi(y,x),\qquad |\Psi(x,y)|\leq C_\Psi
\end{align}
for any $x,y\in\mathbb{R}^D$ and $C_\Psi$ being a positive constant. A prototype example is given by \cite{HaTadmor}
$$
\Psi(x,y)=\frac{C_\Psi}{(1 + |x - y|^2)^\gamma}
$$
for some $\gamma>0$. 
Moreover, in \eqref{particle-eq} $f_i(t)=f(t,x_i(t),v_i(t))\in \mathbb{R}^D$ is the control term. Motivated by \cite{FS,Herty_Zhou_2025}, we take the admissible controls in a set $\mathcal{F}$ given by (i) $f:[0, T] \times \mathbb{R}^{2D} \rightarrow \mathbb{R}^D$ is a Carathéodory function; (ii) $f(t,\cdot)\in W^{1,\infty}_{loc}(\mathbb{R}^{2D})$ for almost every $t \in [0, T]$; (iii) $|f(t,0)| + \|f(t,\cdot)\|_{Lip} \leq C_B$ for almost every $t \in [0, T]$ with $0<C_B<\infty$ a constant.

In what follows, we use the notation for any $v_i=(v_i^{(1)},v_i^{(2)},\dots,v_i^{(D)})\in\mathbb{R}^D$ and $v_i=(v_j^{(1)},v_j^{(2)},\dots,v_j^{(D)})\in\mathbb{R}^D$
$$
\langle v_i, v_j\rangle=\sum_{k=1}^dv_i^{(k)}v_j^{(k)},\qquad |v_i|^2 = \langle v_i, v_i\rangle. 
$$
For $N$ particles, we write $v(t)=(v_1(t),v_2(t),\dots,v_N(t))$ and denote 
$$
\|v\|_N^2 = \frac{1}{N}\sum_{i=1}^N |v_i|^2. 
$$
Similarly, we also express the position and the control by 
$$
x(t)=(x_1(t),x_2(t),\dots,x_N(t)),\qquad 
f(t)=(f_1(t),f_2(t),\dots,f_N(t)).
$$ 

Now we formulate the optimal control problem $\mathcal{Q}_N(0,T,x_0,v_0)$ for the particle systems \eqref{particle-eq} with the following cost function.
\begin{eqnarray}\label{particle:cost}
\begin{aligned}
    \int_{0}^T \|v(t)-\bar{v}\|_N^2 + \lambda \|f(t)\|_N^2 ~dt=\frac{1}{N}\sum_{i=1}^N\int_{0}^T |v_i(t)-\bar{v}|^2 + \lambda |f_i(t)|^2 dt.
\end{aligned}
\end{eqnarray}    
Here $\lambda$ is a weighting parameter
for the control. Notice that only the velocity term is considered in the cost function while the positions for  $x_i(t)$ are not included. We denote $\mathcal{Q}_N(0,T,x_0,v_0)$ as the optimal problem on the time interval $[0,T]$ with initial data $x_0=(x_{10},x_{20},\dots,x_{N0})$, $v_0=(v_{10},v_{20},\dots,v_{N0})$ and $\mathcal{V}_N(0,T,x_0,v_0)$ the minimum value of cost function, i.e. 
\begin{align*}
&\mathcal{V}_N(0,T,x_0,v_0)=\min_{f\in\mathcal{F}}\int_{0}^T \|v(t)-\bar{v}\|_N^2 + \lambda \|f(t)\|_N^2 ~dt\\[1mm]
& \text{subject to equations \eqref{particle-eq}}.
\end{align*}

\subsection{Optimal control problem for the mean field equation}\label{Section2.2}

If the number of particles $N$ in \eqref{particle-eq}
is large enough, we may derive the equation for the probability distribution function $\mu=\mu(t,x,v)$. Formally, by using the BBGKY hierarchy and the molecular chaos assumption, we derive the mean-field limit equation
\begin{align*}
    \partial_t \mu + v\cdot \nabla_x \mu + \nabla_v \cdot ( Q+ f\mu) = 0.
\end{align*}
Here, the collision term is given by
\begin{align*}
Q &= \int_{\mathbb{R}^{D}}\int_{\mathbb{R}^{D}} \Psi(x,y)(v_*-v)\mu(t,x,v)\mu(t,y,v_*)dv_*dy.
\end{align*}
For a detailed derivation, we refer to \cite{HaTadmor} for the case without control term $f$. We can also formally take the mean field limit for the target function in \eqref{particle:cost} and formulate the optimal control problem $\mathcal{Q}_M(0,T,\mu_0)$ as
\begin{eqnarray}\label{mean-field-pb}
\begin{aligned}
    \mathcal{V}_M(0,T,\mu_0)
    =&~\min_{f\in\mathcal{F}}   \int_{0}^T \int_{\mathbb{R}^{D}}\int_{\mathbb{R}^{D}} |v-\bar{v}|^2d\mu(t,x,v)  + \lambda |f|^2d\mu(t,x,v) ~dx dt, \\[2mm]
    &\partial_t \mu + v\cdot \nabla_x \mu + \nabla_v \cdot ( Q+ f\mu) = 0, \\[2mm]
    &\mu(0,x,v) = \mu_0(x,v).
\end{aligned}
\end{eqnarray}
Note that this problem is connected with the problem $\mathcal{Q}_N(0,T,x_0,v_0)$ for the particle system if we consider the empirical measure on $[0,T] \times \mathbb{R}^D$
\begin{equation}\label{empirical}
\mu_{N}(t,x,v) = \frac{1}{N}\sum_{i=1}^N \delta\left(x-x_i(t)\right)\otimes \delta\left(v-v_i(t)\right)
\end{equation}
with $(x_i,v_i)(t)$ $(i=1,2,...,N)$ being the solution of \eqref{particle-eq}.

The existence and uniqueness of the solution $\mu$ to the problem \eqref{mean-field-pb} has been established in \cite{FS}. To recall the theorem,  the definition of the $p-$Wasserstein distance between two probability measures $\mu$ and $\nu$ is given:
$$
\mathcal{W}_p(\mu,\nu) = \inf_{r \in \Gamma(\mu,\nu)} \left( \int_{\mathbb{R}^{2D}} |x-y|^p dr(x,y) \right)^{1/p}.
$$
Here, $\Gamma(\mu,\nu)$ denotes the set of transport plans, i.e., collection of all probability measures with marginals $\mu$ and $\nu$, see also \cite{MR2459454}.

\begin{theorem}\label{thm21}
Assume that the initial data $\mu_0\in P(\mathbb{R}^D)$ in \eqref{mean-field-pb} is compactly supported, i.e., there exists $R > 0$ such that
$\text{supp}~ \mu_0 \subset B(0, R)\subset \mathbb{R}^{2D}$. Moreover, the empirical measure
$\mu_N(0,x,v)$ converges to $\mu_0$ in $\mathcal{W}_1$ distance. Then, there exists an optimal control $f(t,x)$ and a weak equi-compactly supported solution $\mu(t,x,v)$ to the problem \eqref{mean-field-pb}. Namely, for all $t\in [0,T]$ the distribution $\mu(t,x,v) \in C([0, T];P_1(\mathbb{R}^{2D}))$ satisfies $\text{supp}~ \mu(t,\cdot) \subset B(0, R)$ and 
\begin{eqnarray}\label{weak}
\begin{aligned}
    &\int \phi(t,x,v)d\mu(t,x,v) -\int \phi(0,x,v)d\mu_0(x,v) \\[2mm]
    = &\int_{0}^{t} \int \Big[\partial_t \phi + \nabla_x \phi \cdot v + \nabla_v \phi \cdot (Q + f) \Big]d\mu(s,x,v)ds, 
\end{aligned}
\end{eqnarray}
for any $\phi\in C_0^{\infty}([0,T]\times \mathbb{R}^{2D})$.
The optimal solution satisfies
$$
\lim_{k\rightarrow \infty}\mathcal{W}_1(\mu_{N_k}(t,\cdot), \mu(t,\cdot)) = 0
$$
uniformly with respect to $t \in [0, T]$ and $f_{N_k}$ converges to $f$ in $\mathcal{F}$. Here $\mu_{N_k}$ is given by \eqref{empirical} and $(x_i(t),v_i(t),f_{N_k}(t,x))$ is the optimal solution to \eqref{particle-eq} with $N_k$ particles. Moreover, the cost function has the lower semi-continuous property:
\begin{align*}
&\int_{0}^T \int_{\mathbb{R}^{D}}\int_{\mathbb{R}^{D}} \Big(|v-\bar{v}|^2  + \lambda |f|^2\Big)d\mu(t,x,v)~dxdt\\[1mm]
 \leq &~ \liminf_{k\rightarrow \infty}\int_0^T\int_{\mathbb{R}^{D}}\int_{\mathbb{R}^{D}}  \Big(|v-\bar{v}|^2  + \lambda |f_{N_k}|^2\Big)d\mu_{N_k}(t,x,v) ~dxdt.    
\end{align*}
\end{theorem}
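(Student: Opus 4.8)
The plan is to argue by compactness, combining the direct method of the calculus of variations with the standard mean-field limit machinery: the empirical measures $\mu_{N}$ built from the \emph{optimal} $N$-particle trajectories will be the approximating sequence, and one shows that a limit point is an optimal pair for \eqref{mean-field-pb}. First I would establish existence of an optimal control $f_N\in\mathcal F$ for each finite $N$. Fixing $N$, take a minimizing sequence $(f_N^k)_k\subset\mathcal F$ for $\mathcal V_N(0,T,x_0,v_0)$. The structural bounds defining $\mathcal F$ (Carathéodory, $f(t,\cdot)\in W^{1,\infty}_{loc}$, and $|f(t,0)|+\|f(t,\cdot)\|_{Lip}\le C_B$ a.e.) together with boundedness of $\Psi$ give, via Gronwall applied to \eqref{particle-eq}, a uniform-in-$k$ bound on $(x^k,v^k)$ on $[0,T]$; hence the forces $f_N^k(t,x^k(t),v^k(t))$ are uniformly bounded and the trajectories are equi-Lipschitz in $t$. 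Arzelà–Ascoli extracts a uniformly convergent subsequence of trajectories, while the uniform Lipschitz/growth bounds allow passing to a limit control $f_N\in\mathcal F$ (closedness of $\mathcal F$ under a.e.\ convergence of $f(t,\cdot)$, the Lipschitz bound being preserved); continuity of the flow in the control and lower semicontinuity of $|v-\bar v|^2+\lambda|f|^2$ yield optimality of $f_N$.

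Next I would derive a priori estimates uniform in $N$. Since $\mathrm{supp}\,\mu_N(0,\cdot)$ is (asymptotically) in $B(0,R)$ and $\Psi$ is bounded while the control has linear growth with constant $C_B$, a Gronwall argument shows all positions and velocities stay in a fixed ball $B(0,R_T)$ on $[0,T]$, with $R_T$ independent of $N$; consequently $\mu_{N}(t,\cdot)$ is uniformly compactly supported, and the uniform bound on $v_i$ and on the force gives equicontinuity of $t\mapsto\mu_N(t,\cdot)$ in $\mathcal W_1$. Arzelà–Ascoli in $C([0,T];P_1(B(0,R_T)))$ then produces a subsequence $\mu_{N_k}\to\mu$, uniformly in $\mathcal W_1$, with $\mu\in C([0,T];P_1(\mathbb R^{2D}))$ equi-compactly supported; along a further subsequence $f_{N_k}\to f$ in $\mathcal F$.

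It remains to identify $\mu$ as a weak solution of \eqref{mean-field-pb} driven by $f$ and to prove the lower semicontinuity of the cost. Evaluating \eqref{weak} with $\phi\in C_0^\infty$ against $\mu_{N_k}$ reduces it to an identity for the $N_k$-particle ODE system up to an $O(1/N_k)$ error; I then pass to the limit term by term. The linear transport term converges by $\mathcal W_1$-convergence and smoothness of $\phi$; the control term $\int\nabla_v\phi\cdot f_{N_k}\,d\mu_{N_k}$ converges because $f_{N_k}\to f$ locally uniformly and $\mu_{N_k}\to\mu$ weakly with uniformly compact support; and the interaction term, being quadratic in the measure, converges by testing the bounded Lipschitz function $\Phi(x,v,y,v_*):=\nabla_v\phi(x,v)\cdot\Psi(x,y)(v_*-v)$ against $\mu_{N_k}\otimes\mu_{N_k}\to\mu\otimes\mu$. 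For the cost, $|v-\bar v|^2$ is continuous and bounded on $B(0,R_T)$, so $\int|v-\bar v|^2 d\mu_{N_k}\to\int|v-\bar v|^2 d\mu$; for the control penalty I invoke a Ioffe-type lower semicontinuity theorem for integral functionals under joint convergence (weak convergence of $\mu_{N_k}$, local a.e.\ convergence of $f_{N_k}$), convexity of $|\cdot|^2$ giving $\liminf_k\int\lambda|f_{N_k}|^2 d\mu_{N_k}\ge\int\lambda|f|^2 d\mu$. Finally, comparing with an arbitrary competitor $g\in\mathcal F$ — for which the $N_k$-particle cost converges to the mean-field cost of $g$ and dominates $\mathcal V_{N_k}$ — and using the lower semicontinuity just established, one concludes that $f$ is optimal for \eqref{mean-field-pb}.

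The step I expect to be the main obstacle is the joint passage to the limit in the control terms, both in the cost penalty $\int\lambda|f_{N_k}|^2 d\mu_{N_k}$ and in $\int\nabla_v\phi\cdot f_{N_k}\,d\mu_{N_k}$: one must simultaneously control the oscillation of $f_{N_k}$ and the merely weak convergence of $\mu_{N_k}$. This is precisely where the uniform Lipschitz bound built into the definition of $\mathcal F$ and the uniform compact support of the $\mu_{N_k}$ are indispensable, and it is also the point where the convergence $f_{N_k}\to f$ must be upgraded (from weak to locally uniform, or to a.e.\ pointwise) in order to apply the lower semicontinuity machinery.
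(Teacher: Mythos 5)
The paper does not prove this statement at all: Theorem \ref{thm21} is recalled verbatim from Fornasier--Solombrino \cite{FS}, and the text around it only introduces the Wasserstein distance needed to state it. So there is no in-paper proof to compare against; what you have written is, in outline, a reconstruction of the argument of \cite{FS} itself --- existence of optimal controls for each finite $N$, uniform-in-$N$ compact-support and equicontinuity estimates, Arzel\`a--Ascoli in $C([0,T];P_1)$, term-by-term passage to the limit in the weak formulation (with the quadratic interaction handled through $\mu_{N_k}\otimes\mu_{N_k}\to\mu\otimes\mu$), lower semicontinuity of the cost, and a comparison with fixed competitor controls. That is the right strategy and the right set of ingredients.

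Two steps in your sketch are, however, genuinely thinner than what the cited proof needs, and you only half-acknowledge them. First, ``$f_{N_k}\to f$ in $\mathcal{F}$'' is not produced by your argument: for each fixed $t$ the family $\{f_{N_k}(t,\cdot)\}$ is equi-Lipschitz and hence precompact in $C_{loc}$, but since the admissible controls are only Carath\'eodory in $t$ there is no equicontinuity in time, so a single subsequence converging locally uniformly for a.e.\ $t$ cannot be extracted by Arzel\`a--Ascoli plus a diagonal argument; \cite{FS} prove a dedicated compactness result for $\mathcal{F}$ (equi-Lipschitz bounds in $(x,v)$ combined with weak compactness in the time variable) and the notion of convergence in $\mathcal{F}$ appearing in the theorem is exactly the one produced there. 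Second, and relatedly, your appeal to an Ioffe-type theorem presupposes a.e.\ (or locally uniform) convergence of the controls, which is precisely what is missing; the standard resolution, and the one compatible with merely weak convergence of $\mu_{N_k}$, is to pass to the vector-valued measures $\nu_{N_k}=f_{N_k}\mu_{N_k}$ and use the joint lower semicontinuity of the functional $(\mu,\nu)\mapsto\int\lvert d\nu/d\mu\rvert^2\,d\mu$ under weak convergence, identifying the limit momentum as $f\mu$ afterwards. Finally, your closing comparison with an arbitrary competitor $g\in\mathcal{F}$ tacitly uses that the $N$-particle system driven by the \emph{fixed} control $g$ converges to the controlled Vlasov equation with the same $g$ (a Dobrushin-type stability estimate); this is standard for bounded Lipschitz $\Psi$ and $g\in\mathcal{F}$, but it is a separate ingredient that must be stated and proved, not folded silently into ``the $N_k$-particle cost converges to the mean-field cost of $g$''. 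With these repairs your outline coincides with the proof in the cited reference.
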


\subsection{Optimal control problem for the hydrodynamic equation}\label{section2.3}

Now we formally derive the corresponding hydrodynamic equation using moments \cite{Lev96a}. Multiplying the mean field equation in \eqref{mean-field-pb} by $1,v,|v|^2/2$ and integrating over $v\in\mathbb{R}^D$ yield
\begin{align*}
\partial_t \rho + \nabla_x \cdot (\rho u) &= 0,\\[2mm]
\partial_t (\rho u) + 
     \nabla_x \cdot ( \rho u \otimes u + P) &= S_1,\\[2mm]
 \partial_t (\rho E) + 
     \nabla_x \cdot ( \rho E u + P u + q) &= S_2,
\end{align*}
where the macroscopic quantities are defined by
\begin{align*}
\rho &=\int_{\mathbb{R}^D} d\mu_{t,x}(v),\qquad \rho u_i=\int_{\mathbb{R}^D}v_i ~d\mu_{t,x}(v) ,\qquad \rho E=\int_{\mathbb{R}^D}\frac{1}{2}|v|^2 d\mu_{t,x}(v),\\[1mm]
P_{ij}&=\int_{\mathbb{R}^D}(v_i-u_i)(v_j-u_j) d\mu_{t,x}(v),\qquad
q_{i}=\int_{\mathbb{R}^D}(v_i-u_i)|v-u|^2 d\mu_{t,x}(v),
\end{align*}
respectively. Here we write $\mu_{t,x}(v)=\mu(t,x,v)$ to represent $\mu$ as a distribution of $v$. The source terms are given by
\begin{align*}
    S_1=Q_1+\int_{\mathbb{R}^{D}} f\mu ~dv,\qquad S_2=Q_2+\int_{\mathbb{R}^{D}} v\cdot f\mu ~dv
\end{align*}
with
\begin{align}
    Q_1&=\int_{\mathbb{R}^{D}} \Psi(x,y)\rho(t,x)\rho(t,y)\Big[u(t,y)-u(t,x)\Big]dy,\\[2mm]
    Q_2&= \int_{\mathbb{R}^{D}} \Psi(x,y)\rho(t,x)\rho(t,y)\Big[
u(t,x)\cdot u(t,y)-E(t,x)-E(t,y)\Big] dy.
\end{align}
We assume all quantities are finite. Note that the above moment system is not closed since the equations for $\rho u$ and $\rho E$ also depends on higher order moments $P$ and $q$. In this work, we discuss the optimal control problems with two basic ansatz for closure. Namely, we take two different ansatz of $\mu$ to compute the higher order moments $P$ and $q$ and close the system. 

\subsubsection{Type I:}\label{section2.3.1}
Firstly, we use the equilibrium closure 
$$
\mu_1(t,x,v) = \rho(t,x)\delta(v-u(t,x)).
$$
Then it is not difficult to derive the pressure-less equation with control: 
\begin{align*}
    (\rho u)_t + \nabla_x \cdot (\rho u \otimes u) &= Q_1+\rho(t,x) f_H(t,x).
\end{align*}
Here we use the relation $\int f(t,x,v)d \mu_{t,x}(v) = \rho(t,x)f(t,x,u(t,x))$ to get $f_H(t,x)=f(t,x,u(t,x))$.
Moreover, we substitute the ansatz into the cost function in \eqref{mean-field-pb} and formally obtain the corresponding problem $\mathcal{Q}_{H_1}(0,T,\rho_0,u_0)$:
\begin{align}
\mathcal{V}_{H_1}(0,T,\rho_0,u_0)&=\min_{f_H}   \int_{0}^T \int_{\mathbb{R}^{D}} \rho|u-\bar{v}|^2 + \lambda \rho|f_H|^2 ~dxdt. \label{hydro1-pb1}\\[2mm]
&\rho_t + \nabla_x \cdot (\rho u) = 0,\label{hydro1-pb2}\\[2mm]
    &(\rho u)_t + \nabla_x \cdot (\rho u \otimes u) = Q_1+\rho f_H,\label{hydro1-pb3}\\[2mm] 
    &\rho(0,x)=\rho_0(x),\quad u(0,x)=u_0(x).\label{hydro1-pb4}
\end{align}
In this work, we will make the strong assumption that the optimal control problem \eqref{hydro1-pb1}---\eqref{hydro1-pb4} has the optimal solution $(\rho,u)$ with the optimal control $f_H$. Specifically, for the initial data $(\rho_0,u_0)$ given in a compact support $\mathcal{U}_0$, we assume that there is a compact support $\mathcal{U}_T$, depending on $T$, such that the solution $(\rho,u)(t)\in \mathcal{U}_T$ for all $t\in [0,T]$. Moreover, the solution satisfies $(\rho,u)\in C([0,T];H^1(\mathbb{R}^D))$ and 
$\rho(t,x)>0$ point-wise.

\subsubsection{Type II:}\label{section2.3.2}
We consider the Maxwellian
$$
d\mu_{2,t,x}(v) = \frac{\rho}{(2\pi \theta)^{D/2}}\exp\left(-\frac{|v-u|^2}{2\theta}\right)dv.
$$
This gives the Euler equations with
$$
P_{ij} = \int_{\mathbb{R}^D}(v_i-u_i)(v_j-u_j)\mu_2dv = \delta_{ij} \rho \theta,\qquad q_{i}=\int_{\mathbb{R}^D}(v_i-u_i)|v-u|^2\mu_2 dv = 0.
$$
We denote the pressure term and the energy by
$$
p=\rho \theta,\qquad 
\rho E = \rho e + \frac{1}{2}\rho|u|^2 = \frac{D}{2}\rho \theta  + \frac{1}{2}\rho|u|^2.
$$
Furthermore, the source terms are
$$
S_1=Q_1+\rho F_1,\qquad S_2=Q_2+\rho F_2
$$
with $F_1$ and $F_2$ defined by
\begin{align*}
F_1(t,x)&=\frac{1}{(2\pi \theta)^{D/2}} \int_{\mathbb{R}^D} f(t,x,v)\exp\left(-\frac{|v-u|^2}{2\theta}\right) dv\\[2mm]
F_2(t,x)&=\frac{1}{(2\pi \theta)^{D/2}} \int_{\mathbb{R}^D} v\cdot f(t,x,v)\exp\left(-\frac{|v-u|^2}{2\theta}\right) dv.
\end{align*}
Moreover, the cost function reads 
\begin{align*}
&~ \int_{0}^T\int_{\mathbb{R}^{D}}\int_{\mathbb{R}^{D}} |v-\bar{v}|^2 + \lambda |f|^2 ~d\mu_{2,t,x}(v) dx dt \\[2mm]
=&~ \int_{0}^T\int_{\mathbb{R}^{D}}\int_{\mathbb{R}^{D}} |v-u+u-\bar{v}|^2 + \lambda |f|^2 ~d\mu_{2,t,x}(v) dx dt \\[2mm]
=&~ \int_{0}^T\int_{\mathbb{R}^{D}}\int_{\mathbb{R}^{D}} |v-u|^2+|u-\bar{v}|^2 + \lambda |f|^2 ~d\mu_{2,t,x}(v) dx dt \\[2mm]
=&~ \int_{0}^T \int_{\mathbb{R}^{D}} \Big(\rho|u-\bar{v}|^2 + 2 \rho e\Big) dxdt+ \lambda \int_{0}^T\int_{\mathbb{R}^{D}}\int_{\mathbb{R}^{D}} |f|^2 ~d\mu_{2,t,x}(v) dx dt.
\end{align*}
Unlike the pressure-less case, the term $\int_{\mathbb{R}^{D}} |f|^2 ~d\mu_{2,t,x}(v)$ can not be expressed in terms of macroscopic control variables $F_1$ and $F_2$. Notice that, by using H\"older's inequality, it follows: 
\begin{align*}
\rho|F_1|^2&\leq \frac{\rho}{(2\pi \theta)^{D}} \left(\int_{\mathbb{R}^D} |f|^2\exp\left(-\frac{|v-u|^2}{2\theta}\right) dv\right) \left(\int_{\mathbb{R}^D} \exp\left(-\frac{|v-u|^2}{2\theta}\right) dv\right)\\[2mm]
&\leq \int_{\mathbb{R}^{D}} |f|^2 ~d\mu_{2,t,x}(v)
\end{align*}
and
\begin{align*}
\rho|F_2-u\cdot F_1|^2&=\frac{\rho}{(2\pi \theta)^{D}} \left(\int_{\mathbb{R}^D} (v-u)\cdot f(t,x,v)\exp\left(-\frac{|v-u|^2}{2\theta}\right) dv\right)^2\\[2mm]
&\leq \frac{\rho}{(2\pi \theta)^{D}} \left(\int_{\mathbb{R}^D} |f|^2\exp\left(-\frac{|v-u|^2}{2\theta}\right) dv\right) \int_{\mathbb{R}^D} |v-u|^2\exp\left(-\frac{|v-u|^2}{2\theta}\right) dv\\[2mm]
&= D\theta  \int_{\mathbb{R}^{D}} |f|^2 ~d\mu_{2,t,x}(v)= 2e  \int_{\mathbb{R}^{D}} |f|^2 ~d\mu_{2,t,x}(v).
\end{align*}
Hence, minimizing the cost $\int_{\mathbb{R}^{D}} |f|^2\mu_2(t,x,v)dv$ implies the minimization of both $\rho|F_1|^2$ and $\dfrac{\rho}{2e}|F_2-u\cdot F_1|^2$.
Motivated by this, we define a modified objective function with
\begin{align}\label{def:G}
    G(F_1,F_2) = \rho|F_1|^2 + \frac{\rho}{2e}|F_2-u\cdot F_1|^2
\end{align}
and consider the optimal control problem
\begin{align}
\mathcal{V}_{H_2}(0,T,\rho_0,u_0,E_0)
&=\min_{F_1,F_2}   \int_{0}^T \int_{\mathbb{R}^{D}} \rho|u-\bar{v}|^2 + 2 \rho e + \lambda G(F_1,F_2) ~dxdt.\label{hydro2-pb1}\\[2mm]
\partial_t \rho &+ \nabla_x \cdot (\rho u) = 0,\label{eq3:rho}\\[2mm]
\partial_t (\rho u) &+ 
     \nabla_x \cdot ( \rho u \otimes u + p I) = Q_1+\rho F_1,\label{eq3:m}\\[2mm]
\partial_t (\rho E) &+ 
     \nabla_x \cdot (\rho E u + p u) = Q_2 + \rho F_2,\label{eq3:E}\\[2mm] 
    \rho(0,x)&=\rho_0(x),\quad u(0,x)=u_0(x),\quad E(0,x)=E_0(x).\label{hydro2-pb5}
\end{align}
\begin{remark}
In this work, we take $F_1$ and $F_2$ as two independent controls for the hydrodynamic equations. Note that the  hydrodynamic optimal control problem derived from the kinetic problem is not unique. In Section \ref{section3.4} and \ref{Section4}, we show that, if it exists, the solution of our optimal control problem satisfies the turnpike property.
\end{remark}

Similarly, we assume that the optimal control problem \eqref{hydro2-pb1}---\eqref{hydro2-pb5} has the optimal solution $(\rho,u,E)$ with the optimal control $F_1$ and $F_2$. Given the initial data $(\rho_0,u_0,E_0)$ in a compact support $\mathcal{U}_0$, we assume that there is a compact support $\mathcal{U}_T$, depending on $T$, such that the solution $(\rho,u,E)(t)\in \mathcal{U}_T$ for all $t\in [0,T]$. Moreover, the solution satisfies $(\rho,u,E)\in C([0,T];H^1(\mathbb{R}^D))$ and 
$\rho(t,x)>0, e(t,x)>0$ point-wise. 

\section{Feedback control and cheap control property}\label{Section3}

In this section, we investigate the cheap control property, indicating that the optimal trajectories remain close to steady states on average over sufficiently long time horizons. We first construct suitable feedback control laws for the particle system and then extend these constructions to the mean-field and hydrodynamic levels. Based on these feedback laws, we establish the cheap control property, which can be interpreted as an integral turnpike property. Moreover, the cheap control property serves as an important step toward establishing the exponential turnpike property in the next section. Subsections \ref{Section3.1}–\ref{Section3.3} are devoted to the details for the particle system, the mean-field system, and the hydrodynamic models, respectively.

\subsection{Particle systems}\label{Section3.1}
We start with the following cheap control property for the optimal control problem \eqref{particle-eq} and \eqref{particle:cost}.
\begin{lemma}[Cheap control] \label{lemma1.1}
For the optimal solution $(x(t),v(t))$, it holds
$$
\int_a^T \|v(t)-\bar{v}\|_N^2 + \lambda\|f\|^2_N ~dt \leq \sqrt{\lambda} 
\|v(a)-\bar{v}\|_N^2
$$
for any $0\leq a\leq T$.
\end{lemma}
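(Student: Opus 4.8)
The plan is to bound the tail cost of the optimal trajectory by the cost of an explicit stabilizing feedback, using only the optimality of the optimal control tested against one cleverly chosen competitor. Fix $0\le a\le T$, let $(x,v)$ be the optimal pair with optimal control $f$, and define a competitor control $\hat f$ by $\hat f=f$ on $[0,a]$ and $\hat f(t,x,v)=-\tfrac1{\sqrt\lambda}(v-\bar v)$ on $[a,T]$ (here $v$ is the velocity argument); let $(\hat x,\hat v)$ be the associated trajectory that coincides with $(x,v)$ on $[0,a]$ and solves the resulting closed-loop system on $[a,T]$ with data $(x(a),v(a))$. Since $\hat f$ and $\hat v$ agree with $f$ and $v$ on $[0,a]$, the two controls produce the same running cost there, so optimality of $f$ on $[0,T]$ gives
\[
\int_a^T \|v(t)-\bar v\|_N^2+\lambda\|f(t)\|_N^2\,dt\ \le\ \int_a^T \|\hat v(t)-\bar v\|_N^2+\lambda\|\hat f(t)\|_N^2\,dt .
\]
It therefore remains to estimate the right-hand side.

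On $[a,T]$ the closed loop reads $\tfrac{d}{dt}\hat v_i=\tfrac1N\sum_j\Psi(\hat x_i,\hat x_j)(\hat v_j-\hat v_i)-\tfrac1{\sqrt\lambda}(\hat v_i-\bar v)$. Writing $w_i:=\hat v_i-\bar v$ and differentiating $\|w\|_N^2=\tfrac1N\sum_i|w_i|^2$, the bi-symmetry of $\Psi$ yields the exact identity
\[
\frac{d}{dt}\|w\|_N^2\ =\ -\frac{1}{N^2}\sum_{i,j=1}^{N}\Psi(\hat x_i,\hat x_j)\,|w_i-w_j|^2\ -\ \frac{2}{\sqrt\lambda}\,\|w\|_N^2\ \le\ -\frac{2}{\sqrt\lambda}\,\|w\|_N^2,
\]
the interaction sum being non-positive since $\Psi\ge0$ (the usual Cucker--Smale consensus dissipation). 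Gr\"onwall then gives $\|w(t)\|_N^2\le e^{-2(t-a)/\sqrt\lambda}\,\|w(a)\|_N^2$. Because $\lambda\|\hat f\|_N^2=\lambda\cdot\tfrac1\lambda\|w\|_N^2=\|w\|_N^2$ on $[a,T]$, the integrand on the right-hand side above equals $2\|w\|_N^2$, hence
\[
\int_a^T \|\hat v-\bar v\|_N^2+\lambda\|\hat f\|_N^2\,dt\ =\ 2\int_a^T\|w(t)\|_N^2\,dt\ \le\ 2\|w(a)\|_N^2\int_a^{\infty}e^{-2(t-a)/\sqrt\lambda}\,dt\ =\ \sqrt\lambda\,\|v(a)-\bar v\|_N^2,
\]
using $w(a)=v(a)-\bar v$. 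Combining the two displays proves the lemma.

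Two remarks on the construction. The feedback gain $1/\sqrt\lambda$ is exactly the minimizer over $\alpha>0$ of $\tfrac{1+\lambda\alpha^2}{2\alpha}$, which is the best bound obtainable from the family of proportional feedbacks $f_i=-\alpha(v_i-\bar v)$; this is why the constant $\sqrt\lambda$ appears and where it comes from. The only two points that need genuine care are: (i) admissibility of the competitor, i.e.\ $\hat f\in\mathcal{F}$ — its feedback part is globally Lipschitz and $t$-independent, so this holds once $C_B\ge(1+|\bar v|)/\sqrt\lambda$, while $f\in\mathcal{F}$ already covers $[0,a]$, and $\mathcal F$ is closed under such concatenation since its defining conditions are imposed pointwise in $t$; and (ii) the sign of $\Psi$, which is what turns the interaction term into a dissipation rather than merely a $C_\Psi$-bounded perturbation — without $\Psi\ge0$ one would only obtain a cheap-control constant depending on $C_\Psi$. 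Neither the energy estimate nor the cost bookkeeping is an obstacle. The same strategy applies at the mean-field and hydrodynamic levels, with $\|\cdot\|_N$ replaced by the corresponding density-weighted $L^2$-norm and the standing regularity and compact-support assumptions used to justify the closed-loop estimates.
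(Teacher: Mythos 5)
Your proof is correct and follows essentially the same route as the paper: the same feedback competitor $-\tfrac{1}{\sqrt\lambda}(v-\bar v)$, the same dissipativity of the symmetric interaction term, Gr\"onwall decay, and the cost comparison with the optimal tail (which the paper phrases via the value $\mathcal{V}_N(a,T,x(a),v(a))$ of the subproblem rather than your explicit concatenated competitor, a purely cosmetic difference). Your remarks on admissibility of the feedback in $\mathcal{F}$ and on the sign of $\Psi$ are points the paper passes over silently, and are worth keeping.
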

\begin{proof}
    Considering the feedback control 
    $$
\widetilde{f}_i(t) = - \beta (\widetilde{v}_i-\bar{v})
    $$
with $\beta$ a positive constant and $\widetilde{v}_i$ the associated solution, we have 
$$
\frac{d\widetilde{v}_i}{dt} =  \frac{1}{N}\sum_{j=1}^N \Psi(\widetilde{x}_i,\widetilde{x}_j)(\widetilde{v}_j-\widetilde{v}_i) - \beta (\widetilde{v}_i-\bar{v}).
$$
Multiplying $\widetilde{v}_i-\bar{v}$ and summing up $i$ yield
\begin{align*}
    \frac{d\|\widetilde{v}-\bar{v}\|_N^2}{dt} =&~  \frac{2}{N^2}\sum_{i,j=1}^N \Psi(\widetilde{x}_i,\widetilde{x}_j)\Big\langle \widetilde{v}_i-\bar{v}, (\widetilde{v}_j-\bar{v}) - (\widetilde{v}_i-\bar{v}) \Big\rangle - 2\beta \|\widetilde{v}-\bar{v}\|_N^2.
\end{align*}
Since the function $\Psi$ is symmetric with respect to the indices $i$ and $j$, we know that
\begin{align*}
    \sum_{i,j=1}^N\Psi(\widetilde{x}_i,\widetilde{x}_j)\langle \widetilde{v}_i-\bar{v}, \widetilde{v}_j-\widetilde{v}_i \rangle =&- \sum_{i,j=1}^N\Psi(\widetilde{x}_i,\widetilde{x}_j)\langle \widetilde{v}_j-\bar{v}, \widetilde{v}_j-\widetilde{v}_i \rangle \\[2mm]
    =&-\frac{1}{2} \sum_{i,j=1}^N\Psi(\widetilde{x}_i,\widetilde{x}_j)\langle \widetilde{v}_j-\widetilde{v}_i, \widetilde{v}_j-\widetilde{v}_i \rangle \leq 0.
\end{align*}
Therefore, we obtain 
\begin{align*}
    \frac{d\|\widetilde{v}-\bar{v}\|_N^2}{dt} 
    \leq &~ - 2\beta \|\widetilde{v}-\bar{v}\|_N^2,
\end{align*}
which implies 
$$
\|\widetilde{v}-\bar{v}\|_N^2 \leq e^{-2\beta (t-a)} \|v(a)-\bar{v}\|_N^2.
$$
Then we compute 
\begin{align*}
\mathcal{V}_N(a,T,x(a),v(a)) \leq &~
\int_{a}^T \|\widetilde{v}-\bar{v}\|_N^2 + \lambda \|\widetilde{f}(t)\|_N^2 dt  = \int_{a}^T (1+ \lambda \beta^2)\|\widetilde{v}-\bar{v}\|_N^2 dt\\
\leq &~
\frac{1+\lambda \beta^2}{2\beta} \|v(a)-\bar{v}\|_N^2 .
\end{align*}
Taking $\beta = 1/\sqrt{\lambda}$, we have the bound $(1+\lambda \beta^2)/(2\beta) = \sqrt{\lambda}$.

\end{proof}

\begin{remark}
    The proof of the cheap control property is based on the design of proper feedback controls. The idea of the proof is similar to the first-order particle systems in \cite{Herty_Zhou_2025}.
\end{remark}

\subsection{Mean field level}\label{Section3.2}

Recall that the particle system \eqref{particle-eq} is related to the mean field equation \eqref{mean-field-pb} by considering the empirical measure 
\begin{equation*}
\mu_{N}(t,x,v) = \frac{1}{N}\sum_{i=1}^N \delta\left(x-x_i(t)\right)\otimes \delta\left(v-v_i(t)\right).
\end{equation*}
The feedback law and the cheap control property in the previous subsection are valid for each fix number $N$. Now, we discuss the limiting behavior as $N\rightarrow \infty$.

\begin{lemma}[Cheap control]
Suppose $(\mu(t,x,v),f(t,x,v))$ is the  solution to the optimal control problem \eqref{mean-field-pb}, then the following inequality holds:
\begin{align}
\int_{0}^T \int_{\mathbb{R}^{D}} \Big(|v-\bar{v}|^2  + \lambda |f|^2\Big)d\mu(t,x,v)~dt
\leq &~ \sqrt{\lambda} \int_{\mathbb{R}^{D}} |v-\bar{v}|^2 d\mu(0,x,v).\label{cheap-w} 
\end{align}    
\end{lemma}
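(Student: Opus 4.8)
The plan is to transfer the cheap control estimate for the particle system (Lemma~\ref{lemma1.1}) to the mean-field level by a limiting argument, using the convergence results from Theorem~\ref{thm21}. First I would apply Lemma~\ref{lemma1.1} with $a=0$ to the optimal $N$-particle trajectory $(x_i(t),v_i(t))$ with control $f_{N_k}$ appearing in Theorem~\ref{thm21}. Rewriting the particle norms in terms of the empirical measure $\mu_{N_k}$ gives
\begin{align*}
\int_0^T\!\!\int_{\mathbb{R}^{2D}}\!\!\Big(|v-\bar v|^2+\lambda|f_{N_k}|^2\Big)d\mu_{N_k}(t,x,v)\,dt
\le \sqrt{\lambda}\int_{\mathbb{R}^{2D}}\!\!|v-\bar v|^2\,d\mu_{N_k}(0,x,v).
\end{align*}
Here I should be slightly careful: $f_{N_k}$ in the cost is evaluated along the particle positions, which is exactly $\|f(t)\|_N^2$ in the notation of Lemma~\ref{lemma1.1}, so the left-hand side is genuinely of the claimed form.

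Next I would pass to the limit $k\to\infty$ on both sides. For the right-hand side, since $|v-\bar v|^2$ is continuous and (by equi-compact support) the integration effectively takes place on a fixed ball, the $\mathcal{W}_1$-convergence $\mu_{N_k}(0,\cdot)\to\mu_0=\mu(0,\cdot)$ from Theorem~\ref{thm21} yields convergence of the integral to $\sqrt{\lambda}\int|v-\bar v|^2\,d\mu(0,x,v)$. For the left-hand side I would invoke the lower semicontinuity statement in Theorem~\ref{thm21}, namely that
$$
\int_0^T\!\!\int_{\mathbb{R}^{2D}}\!\!\Big(|v-\bar v|^2+\lambda|f|^2\Big)d\mu\,dt
\le \liminf_{k\to\infty}\int_0^T\!\!\int_{\mathbb{R}^{2D}}\!\!\Big(|v-\bar v|^2+\lambda|f_{N_k}|^2\Big)d\mu_{N_k}\,dt,
$$
with $(\mu,f)$ the optimal mean-field pair. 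Chaining the two gives exactly \eqref{cheap-w}.

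The main obstacle is matching the objects: one must ensure that the $(\mu,f)$ to which the lemma is applied is the same optimal pair whose existence Theorem~\ref{thm21} guarantees, and that the subsequence $N_k$ used in the lower semicontinuity is the one along which both the cost and the initial data converge — Theorem~\ref{thm21} provides all of this simultaneously, so no diagonal extraction is needed. A minor technical point is justifying the limit of the right-hand side despite $|v-\bar v|^2$ being unbounded on $\mathbb{R}^{2D}$; this is handled by the uniform compact support $\text{supp}\,\mu_{N_k}(0,\cdot)\subset B(0,R)$, which lets one replace $|v-\bar v|^2$ by a bounded continuous function without changing the integrals, so that $\mathcal{W}_1$-convergence (equivalently weak convergence against bounded continuous test functions on the common compact set) suffices. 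Once these identifications are in place the proof is essentially immediate from the two cited ingredients.
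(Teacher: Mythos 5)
Your proposal is correct and follows essentially the same route as the paper: apply the particle-level cheap control estimate (Lemma~\ref{lemma1.1}) to the optimal $N_k$-particle solutions, then pass to the limit using the lower semicontinuity of the cost and the $\mathcal{W}_1$-convergence of the initial empirical measures from Theorem~\ref{thm21}. Your additional remarks on the compact support justifying the convergence of the right-hand side are a slightly more careful version of what the paper states without comment.
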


Note that the result in this lemma is analogous to the estimate in the discrete case if we formally take the limit  $N\rightarrow \infty$ in Lemma \ref{lemma1.1}. 

\begin{proof}
Due to lower semi-continuity, we have 
\begin{align*}
\int_{0}^T \int_{\mathbb{R}^{D}} \Big(|v-\bar{v}|^2  + \lambda |f|^2\Big)d\mu(t,x,v)~dt
 \leq&~ \liminf_{k\rightarrow \infty}\int_0^T \Big(|v-\bar{v}|^2  + \lambda |f_{N_k}|^2\Big)d\mu_{N_k}(t,x,v) dt \\[2mm] 
= &~ \liminf_{k\rightarrow \infty}\frac{1}{N_k}\sum_{i=1}^{N_k}\int_0^T |v_i-\bar{v}|^2+\lambda |f_{N_k}|^2 dt. 
\end{align*}
On the other hand, since $f_{N_k}$ is the optimal solution to \eqref{particle-eq} and \eqref{particle:cost}, it follows from the cheap control property of the particle system that
\begin{align*}
\liminf_{k\rightarrow \infty}\frac{1}{N_k}\sum_{i=1}^{N_k}\int_0^T |v_i-\bar{v}|^2+\lambda |f_{N_k}|^2 dt
\leq&~ \liminf_{k\rightarrow \infty} \sqrt{\lambda} \frac{1}{N_k}\sum_{i=1}^{N_k} |v_i(0)-\bar{x}|^2 \\[2mm]
=&~ \sqrt{\lambda} \int |v-\bar{v}|^2 d\mu(0,x,v).
\end{align*}
Here, $\sqrt{\lambda}$ is the same constant as that in Lemma \ref{lemma1.1}.
\end{proof} 

\subsection{Hydrodynamic level}\label{Section3.3}

We now turn to the analysis of the cheap control property at the hydrodynamic level. Recall that two different closure ansatz are employed in this setting. Accordingly, we study two optimal control problems in the following subsections.

\subsubsection{Pressure-less Euler equation}
In this subsection, we consider feedback law and the cheap control property for the optimal control problem \eqref{hydro1-pb1}---\eqref{hydro1-pb4}.
Define the energy functional
$$
\mathcal{E}(t) = \frac{1}{2}\int_{\mathbb{R}^D}\rho(t,x)|u(t,x)-\bar{v}|^2dx.
$$
We first state 
\begin{lemma}\label{lemma31}
    For any control function $f=f(t,x)$ with strictly positive density $\rho=\rho(t,x)$, the energy functional satisfies the equality 
    $$
    \mathcal{E}(t_2) \leq \mathcal{E}(t_1) +  \int_{t_1}^{t_2}\int_{\mathbb{R}^D} \rho (u-\bar{v})\cdot f_H~ dxdt
    $$
    with $0\leq t_1 < t_2 \leq T$.
\end{lemma}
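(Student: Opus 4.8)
The plan is to differentiate $\mathcal{E}$ along the pressure-less flow \eqref{hydro1-pb2}--\eqref{hydro1-pb3} and to absorb the interaction contribution $Q_1$ using the bi-symmetry of $\Psi$, exactly as in the particle estimate of Lemma \ref{lemma1.1}. Set $w := u-\bar v$; since $\bar v$ is constant we have $\mathcal{E}(t)=\tfrac12\int_{\mathbb{R}^D}\rho\,|w|^2\,dx$, $\partial_t w=\partial_t u$ and $\nabla_x w=\nabla_x u$. Throughout, the standing assumptions of Section \ref{section2.3.1} — namely $(\rho,u)(t)$ compactly supported in $\mathcal{U}_T$ and $(\rho,u)\in C([0,T];H^1(\mathbb{R}^D))$ — justify differentiation under the integral sign and all integrations by parts, since the boundary terms at infinity vanish.

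First I would write
\[
\frac{d}{dt}\mathcal{E}(t)=\frac12\int_{\mathbb{R}^D}(\partial_t\rho)\,|w|^2\,dx+\int_{\mathbb{R}^D}\rho\,w\cdot\partial_t u\,dx .
\]
For the first term I substitute the continuity equation $\partial_t\rho=-\nabla_x\cdot(\rho u)$ and integrate by parts, obtaining $\tfrac12\int\rho\,u\cdot\nabla_x(|w|^2)\,dx=\int\rho\,w\cdot(u\cdot\nabla_x)w\,dx$. For the second term I rewrite the momentum equation \eqref{hydro1-pb3} in non-conservative form with the help of the continuity equation, $\rho\big(\partial_t u+(u\cdot\nabla_x)u\big)=Q_1+\rho f_H$, so that $\rho\,\partial_t u=Q_1+\rho f_H-\rho(u\cdot\nabla_x)w$. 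Inserting this, the two convective contributions cancel and one is left with
\[
\frac{d}{dt}\mathcal{E}(t)=\int_{\mathbb{R}^D} w\cdot Q_1\,dx+\int_{\mathbb{R}^D}\rho\,(u-\bar v)\cdot f_H\,dx .
\]

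The key remaining step is to show $\int_{\mathbb{R}^D} w\cdot Q_1\,dx\le 0$. Using $u(y)-u(x)=w(y)-w(x)$ in the definition of $Q_1$ gives
\[
\int_{\mathbb{R}^D} w(x)\cdot Q_1(x)\,dx=\iint \Psi(x,y)\rho(x)\rho(y)\,w(x)\cdot\big(w(y)-w(x)\big)\,dy\,dx ,
\]
and symmetrizing in the exchange $x\leftrightarrow y$ (allowed since $\Psi(x,y)=\Psi(y,x)$) yields
\[
\int_{\mathbb{R}^D} w\cdot Q_1\,dx=-\frac12\iint \Psi(x,y)\rho(x)\rho(y)\,|w(x)-w(y)|^2\,dy\,dx\le 0 ,
\]
which is the continuum analogue of the cancellation used in Lemma \ref{lemma1.1} (and which, as there, uses $\Psi\ge 0$, consistent with the prototype kernel). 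Hence $\frac{d}{dt}\mathcal{E}(t)\le \int_{\mathbb{R}^D}\rho(u-\bar v)\cdot f_H\,dx$, and integrating from $t_1$ to $t_2$ gives the asserted estimate.

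I do not anticipate a genuine obstacle: the argument is the macroscopic counterpart of the energy computation behind Lemma \ref{lemma1.1}, and the only delicate points are (i) the nonnegativity of $\Psi$ needed to discard the $Q_1$ term and (ii) the analytic justification of the formal manipulations, both of which are covered by the stated regularity and compact-support hypotheses on the optimal solution. (Note that the displayed relation is an inequality, not an equality, precisely because the $Q_1$ contribution is only one-signed; "equality" in the statement should read "inequality".)
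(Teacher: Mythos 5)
Your proof is correct and takes essentially the same route as the paper: an energy computation in which the convective contributions cancel and the interaction term is discarded by symmetrizing in $x\leftrightarrow y$, using $\Psi(x,y)=\Psi(y,x)$ (and implicitly $\Psi\ge 0$, exactly as in the paper's own Lemma \ref{lemma1.1}). The only difference is presentational: the paper runs the computation through the weak formulation \eqref{fluid-weak-eq} with the test functions $(\phi,\psi)=\bigl(-\tfrac{|u|^2}{2},u\bigr)$ and $\bigl(-\tfrac{|\bar v|^2}{2},\bar v\bigr)\chi_{B(R)}$, whereas you differentiate $\mathcal{E}$ directly using the non-conservative form of \eqref{hydro1-pb3}; both versions rely on the same formal manipulations and the regularity/compact-support assumptions of Section \ref{section2.3.1}, and your remark that the stated ``equality'' should read ``inequality'' is also accurate.
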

\begin{proof}
A direct computation shows 
$$
\frac{d\mathcal{E}}{dt} = \frac{1}{2}\frac{d}{dt}\int_{\mathbb{R}^D}\rho(t,x)|u(t,x)|^2dx - \frac{d}{dt} \int_{\mathbb{R}^D}\rho(t,x)u(t,x)\cdot\bar{v} dx + \frac{1}{2}\frac{d}{dt}\int_{\mathbb{R}^D}\rho(t,x)|\bar{v}|^2dx.
$$
Due to the conservative of mass, the last term is zero and we have
\begin{align}\label{fluid-time-derivative-E}
\frac{d\mathcal{E}}{dt} = \frac{1}{2}\frac{d}{dt}\int_{\mathbb{R}^D}\rho(t,x)|u(t,x)|^2dx - \frac{d}{dt}\int_{\mathbb{R}^D}\rho(t,x)u(t,x)\cdot\bar{v} dx.
\end{align}
We use the weak form of the equation \eqref{hydro1-pb2}---\eqref{hydro1-pb3}. For any test function $(\phi,\psi)$, it follows that
\begin{align}
    \int_{\mathbb{R}^D} (\phi {\rho} + \psi \cdot {m}) ~dx \bigg|_{t=t_1}^{t=t_2} = &~\int_{t_1}^{t_2}\int_{\mathbb{R}^D}  (\phi_t{\rho}+\psi_t \cdot {m})~dxdt\nonumber \\[2mm]
    +&\int_{t_1}^{t_2}\int_{\mathbb{R}^D} \nabla_x \phi \cdot {m}
    + \nabla_x \psi : \left(\frac{{m}\otimes {m}}{{\rho}}\right) + \psi \cdot S_1~ dxdt\label{fluid-weak-eq}
\end{align}
with $S_1=Q_1+\rho f_H$.
In particular, we take 
$\phi = -\dfrac{|u|^2}{2},\psi = u$
and compute 
\begin{align*}
    \int_{\mathbb{R}^D} (\phi {\rho} + \psi \cdot {m}) ~dx \bigg|_{t=t_1}^{t=t_2} = \left( \int_{\mathbb{R}^D} \frac{1}{2}{\rho}|u|^2 ~dx\right) \bigg|_{t=t_1}^{t=t_2}.
\end{align*}
On the other hand, we compute 
\begin{align*}
\int_{t_1}^{t_2}\int_{\mathbb{R}^D}  (\phi_t{\rho}+\psi_t{m})~dxdt = \int_{t_1}^{t_2}\int_{\mathbb{R}^D}  (-{\rho}u\cdot u_t+u_t\cdot {m})~dxdt=0
\end{align*}
and
\begin{align*}
    &~\int_{t_1}^{t_2}\int_{\mathbb{R}^D} \nabla_x \phi \cdot {m}
    + \nabla_x \psi : \left(\frac{{m}\otimes {m}}{{\rho}}\right) + \psi \cdot S_1~ dxdt\\[2mm] 
    = &~  \int_{t_1}^{t_2}\int_{\mathbb{R}^D}  -\nabla_x u \cdot u \cdot {m} + \nabla_x u : \left(\rho{u}\otimes {u}\right)
    + u\cdot S_1 ~dxdt 
    = \int_{t_1}^{t_2}\int_{\mathbb{R}^D} u\cdot S_1 ~dxdt.
\end{align*}
The last three equations give
\begin{align*}
    \left( \int_{\mathbb{R}^D} \frac{1}{2}{\rho}|u|^2 ~dx\right) \bigg|_{t=t_1}^{t=t_2} = \int_{t_1}^{t_2}\int_{\mathbb{R}^D} u\cdot S_1 ~dxdt.
\end{align*}
Moreover, we take $\phi(x) = -\dfrac{|\bar{v}|^2}{2}\chi_{B(R)}(x)$ and $\psi(x) = \bar{v}~\chi_{B(R)}(x)$ in \eqref{fluid-weak-eq}. Here $\chi_{B(R)}(x)$ means the characteristic function of the ball $B(R)=\{|x|<R\}$. According to our assumption that the solution $(\rho,u)$ belongs to a compact support, we may take $R$ sufficiently large and obtain
\begin{align*}
    \left( \int_{\mathbb{R}^D} {\rho}u\cdot \bar{v} ~dx\right) \bigg|_{t=t_1}^{t=t_2} 
    = &~ \int_{t_1}^{t_2} \int_{\mathbb{R}^D} \bar{v}\cdot S_1 ~dxdt.
\end{align*}
Combining these two equations with \eqref{fluid-time-derivative-E}, we have
\begin{align*}
    &\mathcal{E}(t_2)-\mathcal{E}(t_1) = \int_{t_1}^{t_2}\int_{\mathbb{R}^D} (u-\bar{v})\cdot S_1~ dxdt
    =\int_{t_1}^{t_2}\int_{\mathbb{R}^D}Q_1dx dt
    + \int_{t_1}^{t_2} \int_{\mathbb{R}^D} \rho (u-\bar{v})\cdot f_H dxdt.
\end{align*}
For the first term on the right hand side, we use the symmetric property of $\Psi(x,y)$ to compute
\begin{align*}
\int_{t_1}^{t_2}\int_{\mathbb{R}^D}Q_1dx dt=&\int_{t_1}^{t_2}\int_{\mathbb{R}^D}\int_{\mathbb{R}^D}\Psi(x,y)\rho(x)\rho(y) (u(x)-\bar{v})\cdot(u(y)-u(x)) dxdydt\\[2mm]    
=&-\int_{t_1}^{t_2}\int_{\mathbb{R}^D}\int_{\mathbb{R}^D}\Psi(x,y)\rho(x)\rho(y) (u(y)-\bar{v})\cdot(u(y)-u(x)) dxdydt\\[2mm]
=&-\frac{1}{2}\int_{t_1}^{t_2}\int_{\mathbb{R}^D}\int_{\mathbb{R}^D}\Psi(x,y)\rho(x)\rho(y) |u(y)-u(x)|^2 dxdydt \leq 0.
\end{align*}
Then we obtain the inequality stated in the lemma and complete the proof.
\end{proof}
By using this lemma, we can use the feedback control to conclude
\begin{lemma}\label{lemma3.4}[Cheap control]
For the optimal control $f_H$ with the associated solution $(\rho,u)$, the following cheap control property holds for any $0\leq a \leq T$:
$$
\int_a^{T}\int_{\mathbb{R}^D} \rho |u-\bar{v}|^2 + \lambda \rho |f_H|^2 dxdt\leq \sqrt{\lambda} \int_{\mathbb{R}^D} \rho(a,x) |u(a,x)-\bar{v}|^2 dx.
$$
\end{lemma}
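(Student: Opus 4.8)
The plan is to mimic the feedback argument from Lemma \ref{lemma1.1}, now at the level of the pressure-less Euler system \eqref{hydro1-pb2}--\eqref{hydro1-pb3}, using the energy estimate of Lemma \ref{lemma31} as the key tool. First I would introduce the linear feedback law $\widetilde{f}_H(t,x) = -\beta\,(\widetilde{u}(t,x)-\bar{v})$ for a constant $\beta>0$, and let $(\widetilde{\rho},\widetilde{u})$ be the corresponding trajectory solving \eqref{hydro1-pb2}--\eqref{hydro1-pb3} with initial data $(\rho(a,\cdot),u(a,\cdot))$ at time $t=a$. One subtlety here: Lemma \ref{lemma31} is stated for admissible controls producing a strictly positive density and $H^1$ solution, so I would invoke (or add to the standing assumption of Section \ref{section2.3.1}) that this particular feedback law also generates such a solution on $[a,T]$; alternatively one notes that $\rho$ only enters as a weight and that the estimate is stable under the relevant approximation.

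Next I would apply Lemma \ref{lemma31} with this feedback control. Plugging $\widetilde{f}_H = -\beta(\widetilde{u}-\bar v)$ into the right-hand side of that lemma gives, for the energy $\widetilde{\mathcal{E}}(t) = \tfrac12\int_{\mathbb{R}^D}\widetilde\rho\,|\widetilde u-\bar v|^2\,dx$,
\begin{align*}
\widetilde{\mathcal{E}}(t_2) \;\le\; \widetilde{\mathcal{E}}(t_1) - \beta\int_{t_1}^{t_2}\int_{\mathbb{R}^D} \widetilde\rho\,|\widetilde u-\bar v|^2\,dx\,dt \;=\; \widetilde{\mathcal{E}}(t_1) - 2\beta\int_{t_1}^{t_2}\widetilde{\mathcal{E}}(t)\,dt.
\end{align*}
This integral inequality (a Gronwall-type argument, letting $t_2\to t$ and differentiating, or directly) yields the exponential decay $\widetilde{\mathcal{E}}(t) \le e^{-2\beta(t-a)}\,\widetilde{\mathcal{E}}(a)$, exactly as in the particle case.

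Then I would estimate the cost of the feedback trajectory. By definition of $\mathcal{V}_{H_1}$ and optimality, and since $|\widetilde f_H|^2 = \beta^2|\widetilde u-\bar v|^2$,
\begin{align*}
\int_a^{T}\int_{\mathbb{R}^D} \rho |u-\bar v|^2 + \lambda \rho |f_H|^2\,dxdt
&= \mathcal{V}_{H_1}(a,T,\rho(a,\cdot),u(a,\cdot)) \\
&\le \int_a^{T}\int_{\mathbb{R}^D} \widetilde\rho\,|\widetilde u-\bar v|^2 + \lambda\beta^2\,\widetilde\rho\,|\widetilde u-\bar v|^2\,dxdt \\
&= (1+\lambda\beta^2)\int_a^T 2\widetilde{\mathcal{E}}(t)\,dt
\;\le\; \frac{1+\lambda\beta^2}{\beta}\,\widetilde{\mathcal{E}}(a) \cdot \frac{1}{1} ,
\end{align*}
where I used $\int_a^T 2\widetilde{\mathcal E}(t)\,dt \le 2\widetilde{\mathcal E}(a)\int_a^\infty e^{-2\beta(t-a)}\,dt = \widetilde{\mathcal{E}}(a)/\beta$. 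Since $\widetilde{\mathcal{E}}(a) = \tfrac12\int \rho(a,x)|u(a,x)-\bar v|^2\,dx$, optimizing over $\beta$ by the choice $\beta = 1/\sqrt{\lambda}$ makes the prefactor $(1+\lambda\beta^2)/(2\beta) = \sqrt{\lambda}$, giving the claimed bound.

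The main obstacle is not the algebra — which parallels Lemma \ref{lemma1.1} almost verbatim — but the well-posedness of the closed-loop pressure-less Euler system under the feedback law: the pressure-less Euler equations generically develop $\delta$-shocks, so one must either restrict to the regime where the standing regularity assumption of Section \ref{section2.3.1} applies to the feedback trajectory, or argue that the energy inequality of Lemma \ref{lemma31} (and hence the cheap-control bound) passes to the limit of suitable approximations even when classical solutions fail. I would handle this by explicitly stating that the feedback trajectory is assumed to inherit the same regularity/compact-support properties posited for the optimal solution, so that Lemma \ref{lemma31} applies directly.
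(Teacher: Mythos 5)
Your proposal is correct and follows essentially the same route as the paper's proof: the identical feedback law $\widetilde{f}_H=-\beta(\widetilde{u}-\bar{v})$, the energy inequality of Lemma \ref{lemma31}, a Gronwall argument giving $\widetilde{\mathcal{E}}(t)\le e^{-2\beta(t-a)}\widetilde{\mathcal{E}}(a)$, comparison with the optimal cost via optimality of the tail on $[a,T]$, and the choice $\beta=1/\sqrt{\lambda}$. Your added caveat on well-posedness of the closed-loop pressure-less system is a sensible remark; the paper handles it only implicitly through the standing regularity assumptions of Section \ref{section2.3.1}.
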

\begin{proof}
Starting from $t=a$, we take the feedback control $\widetilde{f}_H=-\beta(\widetilde{u}-\bar{v})$ where $\widetilde{\rho}$ and $\widetilde{u}$ are solutions to the equation 
\begin{align*}
&\widetilde{\rho}_t + \nabla_x \cdot (\widetilde{\rho} \widetilde{u}) = 0,\\[2mm]
&(\widetilde{\rho} \widetilde{u})_t + \nabla_x \cdot (\widetilde{\rho} \widetilde{u} \otimes \widetilde{u}) = S_1,\\[2mm] 
&\widetilde{\rho}(a,x)=\rho(a,x),\quad \widetilde{u}(a,x)=u(a,x).
\end{align*}
Note that $\rho(a,x)$ and $u(a,x)$ are values of optimal trajectory at time $t=a$.
Due to Lemma \ref{lemma31}, we have
    $$
    \mathcal{E}(t) - \mathcal{E}(a) \leq -\beta  \int_{a}^{t}\int_{\mathbb{R}^D} \widetilde{\rho} |\widetilde{u}-\bar{v}|^2 dxdt=-2\beta\int_a^t \mathcal{E}(s)ds
    $$
    for any $a\leq t\leq T$. By using the Gronwall's inequality, we have
$$
\mathcal{E}(t) \leq  \exp(-2\beta(t-a))\mathcal{E}(a)
$$
which can be also written as
$$
\int_{\mathbb{R}^D} \widetilde{\rho}(t,x) |\widetilde{u}(t,x)-\bar{v}|^2 dx \leq  \exp(-2\beta(t-a))\int_{\mathbb{R}^D} \rho(a,x) |u(a,x)-\bar{v}|^2 dx.
$$
Now we consider the optimal control problem $\mathcal{V}_H(a,T,\rho(x,a),u(x,a))$ which starts from time $t=a$. Clearly, $f_H|_{t\in[a,T]}$ is also the optimal control for this subproblem. 
For the optimal control, we know that 
\begin{align*}
\int_a^{T}\int_{\mathbb{R}^D} \rho |u-\bar{v}|^2 + \lambda \rho |f_H|^2 dxdt&\leq  \int_a^{T}\int_{\mathbb{R}^D} \widetilde{\rho} |\widetilde{u}-\bar{v}|^2 + \lambda \widetilde{\rho} |\widetilde{f}|^2 dxdt\\[1mm]
&= (1+\lambda\beta^2)\int_a^{T}\int_{\mathbb{R}^D} \widetilde{\rho} |\widetilde{u}-\bar{v}|^2 dxdt\\[1mm]
&\leq (1+\lambda\beta^2)\int_a^{T}e^{-2\beta(t-a)}dt \int_{\mathbb{R}^D} \rho(a,x) |u(a,x)-\bar{v}|^2 dx\\[1mm]
&\leq \frac{1+\lambda\beta^2}{2\beta} \int_{\mathbb{R}^D} \rho(a,x) |u(a,x)-\bar{v}|^2 dx.
\end{align*}
At last, we take $\beta=1/\sqrt{\lambda}$ and complete the proof.
\end{proof}

\subsubsection{The full Euler equation}\label{section3.4}

Similarly, we can also consider feedback law and the cheap control property for the optimal control problem \eqref{hydro2-pb1}---\eqref{hydro2-pb5}. In analogy with Lemma \ref{lemma31}, we prove the following lemma

\begin{lemma}\label{lemma3.5} For any control functions $F_1=F_1(t,x)$ and $F_2=F_2(t,x)$ such that the corresponding solution has positive density $\rho=\rho(t,x)$ and positive energy $e=e(t,x)$, we have the following inequality
\begin{align*}
    \int_{\mathbb{R}^D} \frac{1}{2}\rho|u-\bar{v}|^2 +  \rho e ~dx\Big|_{t=t_1}^{t=t_2} \leq \int_{t_1}^{t_2}\int_{\mathbb{R}^D}\rho (F_2 - \bar{v}\cdot F_1)~dxdt.
\end{align*}
\end{lemma}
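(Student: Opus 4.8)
The plan is to follow the strategy of Lemma~\ref{lemma31}, now monitoring the combined kinetic and internal energy. First I would introduce the functional
\[
\mathcal{E}_2(t) = \int_{\mathbb{R}^D} \tfrac{1}{2}\rho|u-\bar{v}|^2 + \rho e\, dx
\]
and, using $\rho E = \rho e + \tfrac12\rho|u|^2$, rewrite it as
\[
\mathcal{E}_2(t) = \int_{\mathbb{R}^D}\rho E\, dx - \bar{v}\cdot\int_{\mathbb{R}^D}\rho u\, dx + \tfrac12|\bar{v}|^2\int_{\mathbb{R}^D}\rho\, dx .
\]
By conservation of mass, equation~\eqref{eq3:rho}, the last term is constant in $t$, so the increment of $\mathcal{E}_2$ over $[t_1,t_2]$ is controlled by those of $\int\rho E\,dx$ and $\int\rho u\,dx$.

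Next I would test the momentum equation~\eqref{eq3:m} and the energy equation~\eqref{eq3:E} against a smooth cutoff equal to $1$ on a ball $B(R)$ chosen large enough that the solution is supported in $B(R)$ for every $t\in[0,T]$, exactly as in the weak-form argument of Lemma~\ref{lemma31}. Because the fluxes $\rho u\otimes u + pI$ and $\rho Eu + pu$ then vanish near $\partial B(R)$ (note $p = \tfrac{2}{D}\rho e$), their spatial divergences integrate to zero, giving
\[
\frac{d}{dt}\int_{\mathbb{R}^D}\rho u\, dx = \int_{\mathbb{R}^D}(Q_1 + \rho F_1)\, dx, \qquad \frac{d}{dt}\int_{\mathbb{R}^D}\rho E\, dx = \int_{\mathbb{R}^D}(Q_2 + \rho F_2)\, dx .
\]
Combining with the previous step yields
\[
\mathcal{E}_2(t_2)-\mathcal{E}_2(t_1) = \int_{t_1}^{t_2}\!\!\int_{\mathbb{R}^D}(Q_2 - \bar{v}\cdot Q_1)\, dxdt + \int_{t_1}^{t_2}\!\!\int_{\mathbb{R}^D}\rho(F_2 - \bar{v}\cdot F_1)\, dxdt ,
\]
so the proof reduces to showing $\int_{\mathbb{R}^D}(Q_2 - \bar{v}\cdot Q_1)\, dx \le 0$.

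For the $Q_1$ term, the weight $\Psi(x,y)\rho(t,x)\rho(t,y)$ is symmetric while $u(t,y)-u(t,x)$ is antisymmetric under $x\leftrightarrow y$, so $\int_{\mathbb{R}^D} Q_1\, dx = 0$ and hence $\bar{v}\cdot\int Q_1\,dx=0$. For the $Q_2$ term, I would substitute $E = e + \tfrac12|u|^2$ to obtain the pointwise identity
\[
u(t,x)\cdot u(t,y) - E(t,x) - E(t,y) = -\tfrac12|u(t,x)-u(t,y)|^2 - e(t,x) - e(t,y),
\]
whence
\[
\int_{\mathbb{R}^D} Q_2\, dx = -\iint_{\mathbb{R}^D\times\mathbb{R}^D}\Psi(x,y)\rho(x)\rho(y)\Big(\tfrac12|u(x)-u(y)|^2 + e(x) + e(y)\Big)\, dxdy \le 0
\]
using $\Psi\ge 0$, $\rho>0$ and $e>0$. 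Inserting these two facts into the identity for $\mathcal{E}_2(t_2)-\mathcal{E}_2(t_1)$ completes the argument.

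The main obstacle is the rigorous justification of the integrated identities, since the solution is only assumed to lie in $C([0,T];H^1(\mathbb{R}^D))$ with compact support in space; this is handled exactly as in Lemma~\ref{lemma31} via a cutoff, the pressure terms causing no trouble because they appear only inside spatial divergences that integrate away. The one step genuinely new relative to the pressure-less case is the rearrangement of $Q_2$ through $E = e + \tfrac12|u|^2$, which is precisely what exposes the correct sign.
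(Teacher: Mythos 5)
Your proposal is correct and follows essentially the same route as the paper: the paper likewise integrates the energy equation directly, tests the mass and momentum equations against constant (cutoff) test functions $\tfrac12|\bar v|^2\chi_{B(R)}$ and $-\bar v\,\chi_{B(R)}$, kills $\int Q_1\,dx$ by the symmetry of $\Psi$, and gets the sign of $\int Q_2\,dx$ from the identity $E(x)+E(y)-u(x)\cdot u(y)=e(x)+e(y)+\tfrac12|u(x)-u(y)|^2$. The only cosmetic difference is that you expand $\tfrac12\rho|u-\bar v|^2+\rho e$ first and take moments afterwards, which is equivalent to the paper's choice of test functions.
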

\begin{proof}

Firstly, we integrate $t\in[t_1,t_2]$ in the equation  \eqref{eq3:E} to get 
\begin{align*}
    \int_{\mathbb{R}^D} \frac{1}{2}\rho|u|^2 +  \rho e ~dx\Big|_{t=t_1}^{t=t_2} = \int_{t_1}^{t_2}\int_{\mathbb{R}^D}S_2~dxdt=\int_{t_1}^{t_2}\int_{\mathbb{R}^D}Q_2+\rho F_2~dxdt.
\end{align*}
Notice that the source term satisfies 
\begin{align*}
\int_{\mathbb{R}^D}Q_2~dx =& - \int_{\mathbb{R}^{2D}} \Psi(x,y)\rho(t,x)\rho(t,y)[E(t,x)+E(t,y)-u(t,x)\cdot u(t,y)] dxdy \\[1mm] 
=& - \int_{\mathbb{R}^{2D}} \Psi(x,y)\rho(t,x)\rho(t,y)\left[e(t,x)+e(t,y)+\frac{1}{2}|u(t,x)-u(t,y)|^2\right] dxdy \\[1mm]
\leq&~ 0.
\end{align*}
Then we have 
\begin{align}\label{eq3:test2}
    \int_{\mathbb{R}^D} \frac{1}{2}\rho|u|^2 +  \rho e~dx\Big|_{t=t_1}^{t=t_2} \leq  \int_{t_1}^{t_2}\int_{\mathbb{R}^D}\rho F_2~dxdt.
\end{align}
On the other hand, we consider the weak form of equations \eqref{eq3:rho}, \eqref{eq3:m}: 
\begin{align}
    &\int_{\mathbb{R}^D} \rho \phi ~dx\Big|_{t=t_1}^{t=t_2} = \int_{t_1}^{t_2}\int_{\mathbb{R}^D}\Big(\rho \phi_t + (\rho u)\cdot \nabla \phi \Big)dxdt\label{eq3:rho-weak}\\[2mm]
    &\int_{\mathbb{R}^D} \rho u\cdot \psi~ dx\Big|_{t=t_1}^{t=t_2} = \int_{t_1}^{t_2}\int_{\mathbb{R}^D}\rho u\cdot \psi_t + (\rho u\otimes u):\nabla \psi  + p\nabla\cdot \psi +\psi \cdot S_1~dxdt\label{eq3:m-weak}
\end{align}
and take the test functions $\phi=\frac{1}{2}|\bar{v}|^2 \chi_{B(R)}(x),~\psi=-\bar{v}~\chi_{B(R)}(x)$
to obtain
\begin{align*}
    \int_{\mathbb{R}^D} \frac{1}{2}\rho|\bar{v}|^2 - \rho u\cdot \bar{v}~dx\Big|_{t=t_1}^{t=t_2} = \int_{t_1}^{t_2}\int_{\mathbb{R}^D}-\bar{v}\cdot S_1~dxdt= -\bar{v}\cdot \int_{t_1}^{t_2}\int_{\mathbb{R}^D}Q_1+\rho F_1~dxdt.
\end{align*}
Here we also use the assumption that the solution $(\rho,u,E)$ belongs to a compact support $\mathcal{U}_T\subset B(R)$ for sufficiently large $R$.
Due to the symmetric property of $\Psi(x,y)$, we have
\begin{align*}
    \int_{\mathbb{R}^D}Q_1~dx &= - \int_{\mathbb{R}^{2D}} \Psi(x,y)\rho(t,x)\rho(t,y)[u(t,x)-u(t,y)]dxdy=0.
\end{align*}
Thus we obtain
\begin{align*}
    \int_{\mathbb{R}^D} \frac{1}{2}\rho|\bar{v}|^2 - \rho u\cdot \bar{v}~dx\Big|_{t=t_1}^{t=t_2} = - \int_{t_1}^{t_2}\int_{\mathbb{R}^D}\rho \bar{v}\cdot F_1~dxdt.
\end{align*}
Combining this with the inequality in \eqref{eq3:test2}, we get
\begin{align}\label{eq3:test3}
    \int_{\mathbb{R}^D} \frac{1}{2}\rho|u-\bar{v}|^2 +  \rho e ~dx\Big|_{t=t_1}^{t=t_2} \leq \int_{t_1}^{t_2}\int_{\mathbb{R}^D}\rho (F_2 - \bar{v}\cdot F_1)~dxdt.
\end{align}
This completes the proof of lemma. 
\end{proof}

Then we use this lemma to obtain

\begin{lemma}[Cheap control]\label{lemma3.6}
For the optimal control $F_1$ and $F_2$ with the associated solution $(\rho,u,E)$, the following cheap control property holds for any $0\leq a \leq T$:
$$
\int_a^{T}\int_{\mathbb{R}^D} \rho |u-\bar{v}|^2 + 2\rho e + \lambda G(F_1,F_2) dxdt\leq \sqrt{\lambda} \int_{\mathbb{R}^D} \rho(a,x) |u(a,x)-\bar{v}|^2 + 2\rho(a,x)e(a,x)dx.
$$
\end{lemma}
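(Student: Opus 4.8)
The plan is to mirror the proof of Lemma~\ref{lemma3.4}. By the dynamic programming principle, the restriction $(F_1,F_2)|_{[a,T]}$ of the optimal control is optimal for the subproblem $\mathcal{V}_{H_2}(a,T,\rho(a,\cdot),u(a,\cdot),E(a,\cdot))$ with initial data read off the optimal trajectory at $t=a$, so it suffices to bound the value of that subproblem by the cost of a well-chosen stabilizing feedback competitor. The natural Lyapunov quantity is
$$
\mathcal{E}_2(t) = \int_{\mathbb{R}^D}\Big(\tfrac12\rho|u-\bar v|^2 + \rho e\Big)\,dx,
$$
so that $2\mathcal{E}_2(t)$ equals the state part $\int_{\mathbb{R}^D}(\rho|u-\bar v|^2 + 2\rho e)\,dx$ of the running cost, and Lemma~\ref{lemma3.5} is exactly the tool that controls its evolution.

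For the competitor I would start at $t=a$ with the feedback
$$
\widetilde F_1 = -\beta(\widetilde u - \bar v),\qquad
\widetilde F_2 = \bar v\cdot\widetilde F_1 - \beta\big(|\widetilde u-\bar v|^2 + 2\widetilde e\big),
$$
where $\beta>0$ is to be fixed and $(\widetilde\rho,\widetilde u,\widetilde E)$ solves \eqref{eq3:rho}--\eqref{eq3:E} with these controls and with data $(\widetilde\rho,\widetilde u,\widetilde E)(a,\cdot)=(\rho,u,E)(a,\cdot)$. The choice of $\widetilde F_2$ is engineered so that $\widetilde F_2-\bar v\cdot\widetilde F_1 = -\beta(|\widetilde u-\bar v|^2+2\widetilde e)$, which is precisely the integrand on the right-hand side of Lemma~\ref{lemma3.5}; at the same time a one-line computation using $\widetilde F_1=-\beta(\widetilde u-\bar v)$ gives $\widetilde F_2-\widetilde u\cdot\widetilde F_1 = -2\beta\widetilde e$, so that the modified control penalty collapses to a multiple of the state cost,
$$
G(\widetilde F_1,\widetilde F_2)=\widetilde\rho|\widetilde F_1|^2+\frac{\widetilde\rho}{2\widetilde e}\big|\widetilde F_2-\widetilde u\cdot\widetilde F_1\big|^2 = \beta^2\big(\widetilde\rho|\widetilde u-\bar v|^2+2\widetilde\rho\widetilde e\big).
$$

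Then Lemma~\ref{lemma3.5} applied on $[a,t]$ gives $\widetilde{\mathcal{E}}_2(t)-\widetilde{\mathcal{E}}_2(a)\le -2\beta\int_a^t\widetilde{\mathcal{E}}_2(s)\,ds$, so Gronwall's inequality (as in Lemma~\ref{lemma3.4}) yields $\widetilde{\mathcal{E}}_2(t)\le e^{-2\beta(t-a)}\widetilde{\mathcal{E}}_2(a)$. Using optimality of $(F_1,F_2)|_{[a,T]}$ for the subproblem together with the identity for $G$ above,
$$
\int_a^{T}\!\int_{\mathbb{R}^D}\!\big(\rho|u-\bar v|^2+2\rho e+\lambda G(F_1,F_2)\big)\,dxdt\le (1+\lambda\beta^2)\!\int_a^{T}\!2\widetilde{\mathcal{E}}_2(t)\,dt\le \frac{1+\lambda\beta^2}{2\beta}\cdot 2\widetilde{\mathcal{E}}_2(a),
$$
and since the initial data match, $2\widetilde{\mathcal{E}}_2(a)=\int_{\mathbb{R}^D}\big(\rho(a,x)|u(a,x)-\bar v|^2+2\rho(a,x)e(a,x)\big)\,dx$. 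Choosing $\beta=1/\sqrt\lambda$ turns the prefactor $(1+\lambda\beta^2)/(2\beta)$ into $\sqrt\lambda$ and gives the claimed inequality.

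The main obstacle is not analytic but structural: one has to guess the precise affine form of $\widetilde F_2$ that is simultaneously compatible with the dissipation structure of Lemma~\ref{lemma3.5} and makes $G(\widetilde F_1,\widetilde F_2)$ proportional to the state cost -- once this is found, the argument is the same feedback/Gronwall scheme already used at the particle and pressure-less Euler levels. One also needs the feedback-controlled Euler system \eqref{eq3:rho}--\eqref{eq3:E} to admit a solution with $\widetilde\rho>0$ and $\widetilde e>0$ on $[a,T]$, so that $G(\widetilde F_1,\widetilde F_2)$ is well defined and Lemma~\ref{lemma3.5} applies; this is granted by the same strong well-posedness assumption placed on \eqref{hydro2-pb1}--\eqref{hydro2-pb5} and is handled exactly as the corresponding step in the proof of Lemma~\ref{lemma3.4}.
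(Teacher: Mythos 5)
Your proposal is correct and follows essentially the same argument as the paper: the feedback $\widetilde F_2=\bar v\cdot\widetilde F_1-\beta(|\widetilde u-\bar v|^2+2\widetilde e)$ is algebraically identical to the paper's choice $\widetilde F_2=-\beta[2\widetilde e+\widetilde u\cdot(\widetilde u-\bar v)]$, and the rest (Lemma \ref{lemma3.5} plus Gronwall, optimality of the restricted control, and $\beta=1/\sqrt\lambda$) matches the paper's proof step for step.
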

\begin{proof}
We also consider the feedback control 
$$
\widetilde{F}_1=-\beta(\widetilde{u}-\bar{v}),\qquad \widetilde{F}_2=-\beta[2\widetilde{e}+\widetilde{u}\cdot(\widetilde{u}-\bar{v})]
$$ in the equation 
\begin{align*}
\partial_t \widetilde{\rho} + \nabla_x \cdot (\widetilde{\rho} \widetilde{u}) &= 0,\\[2mm]
\partial_t (\widetilde{\rho} \widetilde{u}) + 
     \nabla_x \cdot ( \widetilde{\rho} \widetilde{u} \otimes \widetilde{u} + \widetilde{p} I) &= S_1,\\[2mm]
\partial_t (\widetilde{\rho} \widetilde{E}) + 
     \nabla_x \cdot (\widetilde{\rho} \widetilde{E} \widetilde{u} + \widetilde{p} \widetilde{u}) &= S_2,\\[1mm] 
    \widetilde{\rho}(a,x)=\rho(a,x),\quad \widetilde{u}(a,x)=&~u(a,x),\quad \widetilde{E}(a,x)=E(a,x).
\end{align*}
Here $\rho(a,x), u(a,x)$ and $E(a,x)$ are values of optimal trajectory at time $t=a$.
We use Lemma \ref{lemma3.5} to obtain
\begin{align*}
    \int_{\mathbb{R}^D} \frac{1}{2}\widetilde{\rho}|\widetilde{u}-\bar{v}|^2 +  \widetilde{\rho} \widetilde{e} ~dx\Big|_{t=a}^{t=s} 
    \leq &~\int_{a}^{s}\int_{\mathbb{R}^D}\widetilde\rho (\widetilde{F}_2 - \bar{v}\cdot \widetilde{F}_1)~dxdt\\[2mm]
    = &~-2\beta \int_{a}^{s}\int_{\mathbb{R}^D}\widetilde \rho \widetilde{e} + \frac{1}{2}\widetilde{\rho}|\widetilde{u} - \bar{v}|^2~dxdt
\end{align*}
    for any $a\leq s\leq T$. By using the Gronwall's inequality, we have
$$
\mathcal{H}(t) \leq  \exp(-2\beta(t-a))\mathcal{H}(a)
$$
with 
$$
\mathcal{H}(t)=\int_{\mathbb{R}^D} \widetilde{\rho}(t,x)\widetilde{e}(t,x)+\frac{1}{2}\widetilde{\rho}(t,x) |\widetilde{u}(t,x)-\bar{v}|^2 dx.
$$
Now we consider the optimal control problem $\mathcal{V}_{H_2}(a,T,\rho(x,a),u(x,a),E(x,a))$ which starts from time $t=a$. Clearly, $(F_1,F_2)|_{t\in[a,T]}$ is also the optimal control for this subproblem. 
For the optimal control, we know that 
\begin{align*}
\int_a^{T}\int_{\mathbb{R}^D} \rho |u-\bar{v}|^2 + 2\rho e+\lambda G(F_1,F_2) dxdt&\leq  \int_a^{T}\int_{\mathbb{R}^D} \widetilde{\rho} |\widetilde{u}-\bar{v}|^2 + 2\widetilde{\rho} \widetilde{e}+\lambda G(\widetilde{F}_1,\widetilde{F}_2) dxdt.
\end{align*}
By the definition of $G$ in \eqref{def:G}, we compute
\begin{align*}
G(\widetilde{F}_1,\widetilde{F}_2) &= \rho|\widetilde{F}_1|^2 + \frac{\widetilde{\rho}}{2\widetilde{e}}|\widetilde{F}_2-\widetilde{u}\cdot \widetilde{F}_1|^2 
= \beta^2 \widetilde{\rho} |\widetilde{u}-\bar{v}|^2 + 2\beta^2 \widetilde{\rho}\widetilde{e}. 
\end{align*}
Then it follows that 
\begin{align*}
&\int_a^{T}\int_{\mathbb{R}^D} \rho |u-\bar{v}|^2 + 2\rho e+\lambda G(F_1,F_2) dxdt
\leq 2(1+\lambda\beta^2)\int_a^T\mathcal{H}(t)dt\\[1mm]
\leq&~ 2(1+\lambda\beta^2)\left(\int_a^{T}e^{-2\beta(t-a)}dt \right)\mathcal{H}(a)
\leq \frac{1+\lambda\beta^2}{\beta} \mathcal{H}(a)\\[1mm]
=&~ \frac{1+\lambda\beta^2}{2\beta} \int_{\mathbb{R}^D} \rho(a,x) |u(a,x)-\bar{v}|^2 + 2\rho(a,x)e(a,x)dx.
\end{align*}
At last, we take $\beta=1/\sqrt{\lambda}$ and complete the proof.
\end{proof}

\section{Exponential turnpike property}\label{Section4}

We are now in a position to state the main theorem.
Namely, we prove the exponential turnpike property: 

\begin{theorem}\label{thm14}
We have the following exponential turnpike results.
\begin{itemize}
\item[(1)] Particle system: there exist constants $C>0$ and $\alpha>0$, which are independent of $N$ and $T$, such that the optimal solution for the problem \eqref{particle-eq}---\eqref{particle:cost} satisfies the exponential turnpike property:
\begin{align}\label{expo-particle}
\|v(t)-\bar{v}\|_N^2  \leq C e^{-\alpha t} \|v(0)-\bar{v}\|_N^2
\end{align}
for any $t\in(0,T)$ when $T>0$ is sufficiently large. 
\item[(2)] Mean field problem: for the optimal control problem \eqref{mean-field-pb}, we assume that the initial data $\mu_0$ has bounded second-order moments with respect to $v$. Then we have the corresponding exponential turnpike property
\begin{equation}\label{turnpike-meanfield}
    \int_{\mathbb{R}^D\times \mathbb{R}^D}|v-\bar{v}|^2 d\mu(t,x,v)  \leq C e^{-\alpha t} \int_{\mathbb{R}^D\times \mathbb{R}^D}|v-\bar{v}|^2 d\mu_0(x,v)
\end{equation}
for any $t\in(0,T)$ when $T>0$ is sufficiently large. Here $C$ and $\alpha$ are the same constants as those in \eqref{expo-particle}.
\item[(3a)] Hydrodynamic equation (pressure-less case): suppose the problem \eqref{hydro1-pb1}---\eqref{hydro1-pb4} has the optimal solution $(\rho,u)$, which satisfies the property in Section \ref{section2.3.1}. Then the optimal solution satisfies the exponential turnpike property:
\begin{align}\label{eq4.3} 
\int_{\mathbb{R}^D} \rho(t,x) |u(t,x)-\bar{v}|^2 dx \leq C e^{-\alpha t} \int_{\mathbb{R}^D} \rho_0 |u_0-\bar{v}|^2 dx
\end{align}
for any $t\in(0,T)$ when $T>0$ is sufficiently large. \item[(3b)] Hydrodynamic equation (Euler equation): for the optimal control problem \eqref{hydro2-pb1}---\eqref{hydro2-pb5}, we assume the existence of  solution which satisfies the property in Section \ref{section2.3.2}. Then the optimal solution $(\rho,u,E)$ satisfies the exponential turnpike property
\begin{equation}\label{eq:4.4}
    \int_{\mathbb{R}^D} \rho(t,x) |u(t,x)-\bar{v}|^2 + 2\rho(t,x)e(t,x)dx  \leq C e^{-\alpha t} \int_{\mathbb{R}^D} \rho_0(x)|u_0(x)-\bar{v}|^2 + 2\rho_0(x)e_0(x)dx
\end{equation}
for any $t\in(0,T)$ when $T>0$ is sufficiently large.  Here the internal energy $e$ is given by $e=E-|u|^2/2$.
\end{itemize}
\end{theorem}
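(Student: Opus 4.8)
All four statements follow from the same two‑ingredient scheme, which I describe for the particle system~(1) and then adapt to the other levels. Write $g(t):=\|v(t)-\bar{v}\|_N^2$ along the optimal trajectory. The first ingredient is Lemma~\ref{lemma1.1}: since the tail of an optimal control is optimal for the corresponding subproblem, the cheap control estimate holds from every starting time, $\int_a^T g(t)+\lambda\|f(t)\|_N^2\,dt\le\sqrt{\lambda}\,g(a)$ for all $a\in[0,T]$, and in particular $\int_a^T g\,dt\le\sqrt{\lambda}\,g(a)$ and $\int_a^T\|f\|_N^2\,dt\le g(a)/\sqrt{\lambda}$. The second ingredient is the dissipativity already used in the proof of Lemma~\ref{lemma1.1}: the interaction term contributes non‑positively to $\frac{d}{dt}\|v-\bar{v}\|_N^2$, so $g$ is absolutely continuous with $g'(t)\le\frac{2}{N}\sum_i\langle v_i-\bar{v},f_i\rangle$ a.e., whence for $0\le s\le t\le T$, by Cauchy--Schwarz (pointwise and in time) together with the two cheap‑control bounds at time $s$,
\begin{equation*}
g(t)-g(s)\le\int_s^t\tfrac{2}{N}\sum_i\langle v_i-\bar{v},f_i\rangle\,d\tau\le 2\Big(\int_s^T g\,d\tau\Big)^{1/2}\Big(\int_s^T\|f\|_N^2\,d\tau\Big)^{1/2}\le 2g(s).
\end{equation*}
This gives the quasi‑monotonicity bound $g(t)\le 3g(s)$ for all $0\le s\le t\le T$.

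Combining the two ingredients: for $a$ with $a+h\le T$, quasi‑monotonicity yields $g(\tau)\ge g(a+h)/3$ on $[a,a+h]$, so $\tfrac{h}{3}g(a+h)\le\int_a^{a+h}g\,d\tau\le\int_a^T g\,d\tau\le\sqrt{\lambda}\,g(a)$. Choosing $h=6\sqrt{\lambda}$ gives $g(a+6\sqrt{\lambda})\le\tfrac12 g(a)$ whenever $a+6\sqrt{\lambda}\le T$. Iterating over consecutive windows, $g(6k\sqrt{\lambda})\le 2^{-k}g(0)$ as long as $6k\sqrt{\lambda}\le T$, and filling the gaps with $g(t)\le 3g(6k\sqrt{\lambda})$ for $t\in[6k\sqrt{\lambda},6(k+1)\sqrt{\lambda})$ produces \eqref{expo-particle} with, e.g., $C=6$ and $\alpha=\tfrac{\ln 2}{6\sqrt{\lambda}}$, manifestly independent of $N$ and $T$; the decay is effective precisely once $T$ is a large multiple of $\sqrt{\lambda}$, as stated. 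Optimizing $h$ sharpens the constants but is not needed.

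For~(2) I would pass to the limit $N_k\to\infty$ in \eqref{expo-particle} along the subsequence of Theorem~\ref{thm21}: the $\mathcal{W}_1$‑convergence $\mu_{N_k}(t,\cdot)\to\mu(t,\cdot)$ uniform in $t$, together with the uniform compact support (equivalently the assumed bounded second $v$‑moments), gives convergence of $\int|v-\bar{v}|^2\,d\mu_{N_k}(t,\cdot)$ for each $t$ and at $t=0$, hence \eqref{turnpike-meanfield} with the same $C,\alpha$; alternatively the mean‑field cheap control lemma and mean‑field energy balance feed directly into the scheme above. For~(3a) take $g(t):=\int_{\mathbb{R}^D}\rho(t,x)|u(t,x)-\bar{v}|^2\,dx$: Lemma~\ref{lemma3.4} gives the cheap control from every $a$, and Lemma~\ref{lemma31} together with the weighted Cauchy--Schwarz inequality $\big|\int\rho(u-\bar{v})\cdot f_H\,dx\big|\le\big(\int\rho|u-\bar{v}|^2\big)^{1/2}\big(\int\rho|f_H|^2\big)^{1/2}$ gives the dissipative bound, so the argument of~(1) applies verbatim and yields \eqref{eq4.3}. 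For~(3b) take $g(t):=\int_{\mathbb{R}^D}\rho|u-\bar{v}|^2+2\rho e\,dx$ and $\mathcal{G}(t):=\int_{\mathbb{R}^D}G(F_1,F_2)\,dx$; the only extra step is to bound the right‑hand side of Lemma~\ref{lemma3.5}: splitting $F_2-\bar{v}\cdot F_1=(F_2-u\cdot F_1)+(u-\bar{v})\cdot F_1$ and applying the two Hölder inequalities used to motivate \eqref{def:G} gives $\big|\int\rho(F_2-\bar{v}\cdot F_1)\,dx\big|\le 2\,\mathcal{G}(t)^{1/2}g(t)^{1/2}$, after which Lemma~\ref{lemma3.6} plays the role of the cheap control estimate and the scheme of~(1) again applies, giving \eqref{eq:4.4} with slightly larger but still $N$‑ and $T$‑independent constants.

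The main obstacle is the quasi‑monotonicity estimate $g(t)\le Cg(s)$: it is what converts the time‑integrated cheap‑control bound into pointwise‑in‑time decay, and it requires pairing the dissipativity of the interaction operator with the cheap‑control bound on $\int\|f\|^2$ tightly enough that the constant stays finite. For~(2) the secondary technical point is justifying convergence of the quadratic moment $\int|v-\bar{v}|^2\,d\mu_{N_k}(t,\cdot)$, which rests on the uniform‑in‑$N$ compact support of Theorem~\ref{thm21}; for~(3b) the bookkeeping with $G$ is routine.
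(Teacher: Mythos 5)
Your proposal is correct and follows essentially the same route as the paper: the cheap control lemmas supply the integrated bound, a quasi-monotonicity estimate $g(t)\le C\,g(s)$ (the paper's Lemma \ref{lemma42} and its hydrodynamic analogues) supplies the pointwise control, and the two are combined by iterating over time windows, which is exactly the content of the paper's abstract Lemma \ref{lemma4.1}; the mean-field case is likewise handled by passing to the limit via Theorem \ref{thm21}. The only cosmetic differences are that you obtain the quasi-monotonicity constant by Cauchy--Schwarz in time (giving a $\lambda$-independent constant) where the paper uses a pointwise Young inequality with a case split on $\lambda$, and you carry out the window iteration inline with explicit constants instead of citing the abstract lemma.
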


The subsequent three subsections are devoted to the proof of this theorem, addressing each level of the hierarchical framework.

\subsection{Proof of the theorem for the particle system}\label{Section4.1}

In order to obtain the exponential turnpike result, we firstly state the following lemma.
\begin{lemma}\label{lemma4.1}
    Denote the nonnegative cost function $\mathcal{E}=\mathcal{E}(t)$ such that 
    \begin{itemize}
        \item[(1)] There exists a constant $C_0>0$ such that  $\int_a^T\mathcal{E}(t)dt \leq C_0 \mathcal{E}(a)$ for any $0\leq a\leq T$.
        \item[(2)] There exists a constant $C_1>0$ such that  $\mathcal{E}(t_2) \leq C_1 \mathcal{E}(t_1)$ for any $0\leq t_1\leq t_2\leq T$.
    \end{itemize}
    Then there exist constants $C>0$ and $\alpha>0$, which are independent of $T$, such that:
    $$
    \mathcal{E}(t)  \leq C e^{-\alpha t} \mathcal{E}(0)
    $$
    for any $t\in(0,T)$ with $T$ sufficiently large.
\end{lemma}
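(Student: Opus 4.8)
The plan is to turn the two hypotheses into a geometric decay of $\mathcal{E}$ along a fixed time‐step, and then use the almost‐monotonicity (2) to interpolate from the resulting discrete grid back to arbitrary times.

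The first step is a \emph{one‐step decay estimate}: I claim that whenever $[a,a+\tau]\subseteq[0,T]$ one has $\mathcal{E}(a+\tau)\le \frac{C_0C_1}{\tau}\,\mathcal{E}(a)$. Indeed, by (2) applied with $t_1=s$ and $t_2=a+\tau$ we get $\mathcal{E}(a+\tau)\le C_1\mathcal{E}(s)$ for every $s\in[a,a+\tau]$; integrating this in $s$ over $[a,a+\tau]$, then enlarging the domain to $[a,T]$ and invoking (1), gives $\tau\,\mathcal{E}(a+\tau)\le C_1\int_a^{a+\tau}\mathcal{E}(s)\,ds\le C_1\int_a^{T}\mathcal{E}(s)\,ds\le C_0C_1\,\mathcal{E}(a)$, which is the claim.

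Next I fix the step size $\tau:=2C_0C_1$, so the one‐step estimate reads $\mathcal{E}(a+\tau)\le\frac12\mathcal{E}(a)$ for all $a$ with $a+\tau\le T$. Iterating along the grid points $0,\tau,2\tau,\dots$ yields $\mathcal{E}(k\tau)\le 2^{-k}\mathcal{E}(0)$ for every integer $k$ with $k\tau\le T$. (The only bookkeeping point is that the $j$-th step, from $(j-1)\tau$ to $j\tau$, needs $j\tau\le T$; this will be automatic below since we only use $j\le k$ with $k\tau\le t<T$.) Finally, for an arbitrary $t\in(0,T)$ set $k:=\lfloor t/\tau\rfloor$, so that $k\tau\le t<T$; applying (2) with $t_1=k\tau\le t=t_2$ and then the grid bound gives $\mathcal{E}(t)\le C_1\mathcal{E}(k\tau)\le C_1 2^{-k}\mathcal{E}(0)$. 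Since $k> t/\tau-1$, we have $2^{-k}<2\cdot 2^{-t/\tau}=2e^{-\alpha t}$ with $\alpha:=(\ln 2)/\tau$, hence $\mathcal{E}(t)\le C e^{-\alpha t}\mathcal{E}(0)$ with $C:=2C_1$. Both $C$ and $\alpha$ depend only on $C_0$ and $C_1$, hence are independent of $T$.

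This is a soft, purely analytic argument and I do not expect a genuine obstacle; the only care needed is (i) checking that the grid points used in the iteration stay $\le T$ — which is guaranteed by $t<T$ — and (ii) confirming the final constants involve only $C_0,C_1$. The hypothesis that $T$ is ``sufficiently large'' plays no role in the estimate itself: for small $T$ the bound simply degenerates to the harmless $\mathcal{E}(t)\le C\,\mathcal{E}(0)$, while for large $T$ one genuinely sees the exponential decay over most of the interval, which is the content relevant to the turnpike statements that follow.
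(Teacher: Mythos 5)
Your proposal is correct and follows essentially the same route as the paper's Appendix~A proof: both combine hypotheses (1) and (2) to obtain a contraction by the factor $C_0C_1/\tau$ over a window of length $\tau$ chosen larger than $C_0C_1$, iterate along the grid $0,\tau,2\tau,\dots$, and then use (2) once more to pass from grid points to arbitrary $t$. The only (cosmetic) difference is that you integrate $\mathcal{E}(a+\tau)\le C_1\mathcal{E}(s)$ directly to get the one-step bound, whereas the paper uses the mean-value/pigeonhole selection of a good point $t_n$ in each interval; the constants you obtain, $C=2C_1$ and $\alpha=\ln 2/(2C_0C_1)$, depend only on $C_0,C_1$ just as in the paper.
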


This lemma is quite standard and we give a proof in Appendix \ref{AppendixA} for completeness.

\begin{remark}
The first assumption in Lemma \ref{lemma4.1} can be guaranteed by the cheap control property which we obtained in the previous section. Therefore, to prove the exponential turnpike property,  it suffices to prove that the optimal solutions satisfy the assumption (II) in Lemma \ref{lemma4.1}.
\end{remark}

For the optimal control problem \eqref{particle-eq}, \eqref{particle:cost}, we state
\begin{lemma}\label{lemma42}
    For the optimal solution $(x(t),v(t))$, we have 
    \begin{align*}
    &\|v(t_2)-\bar{v}\|_N^2 \leq C_{\lambda} \|v(t_1)-\bar{v}\|^2_N.
\end{align*}
Here the constant $C_{\lambda}>1$ satisfies 
$$
C_{\lambda} = \left\{
\begin{array}{ll}
   1 + \lambda^{-1/2},  & \qquad \lambda \leq 1,\\[4mm]
   1 + \lambda^{1/2},  & \qquad \lambda > 1.
\end{array}
\right.
$$
\end{lemma}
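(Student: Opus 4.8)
The plan is to combine the cheap-control estimate of Lemma~\ref{lemma1.1} with an elementary differential inequality for the error $\|v(t)-\bar v\|_N^2$ along the optimal trajectory; the outcome is precisely assumption~(2) of Lemma~\ref{lemma4.1}. First I would invoke Lemma~\ref{lemma1.1} with $a=t_1$, which, applied to the optimal solution, gives
\[
\int_{t_1}^{T}\Big(\|v(t)-\bar v\|_N^2+\lambda\|f(t)\|_N^2\Big)\,dt\ \le\ \sqrt{\lambda}\,\|v(t_1)-\bar v\|_N^2 ,
\]
using that the restriction of the globally optimal control to $[t_1,T]$ is optimal for the subproblem starting at $t_1$ (the cost \eqref{particle:cost} is additive over $[0,t_1]$ and $[t_1,T]$).

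Next I would differentiate $t\mapsto\|v(t)-\bar v\|_N^2$ along the optimal trajectory. Inserting \eqref{particle-eq},
\[
\frac{d}{dt}\|v-\bar v\|_N^2
=\frac{2}{N^2}\sum_{i,j=1}^N\Psi(x_i,x_j)\big\langle v_i-\bar v,\,v_j-v_i\big\rangle
+\frac{2}{N}\sum_{i=1}^N\big\langle v_i-\bar v,\,f_i\big\rangle .
\]
Exactly as in the proof of Lemma~\ref{lemma1.1}, the bi-symmetry of $\Psi$ rewrites the double sum as $-\tfrac12\sum_{i,j}\Psi(x_i,x_j)|v_j-v_i|^2\le 0$, so it may be discarded, and Young's inequality with a parameter $\varepsilon>0$ bounds the control term, leaving $\tfrac{d}{dt}\|v-\bar v\|_N^2\le \varepsilon\|v-\bar v\|_N^2+\varepsilon^{-1}\|f\|_N^2$.

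Finally, integrating this over $[t_1,t_2]$, enlarging the interval to $[t_1,T]$ (all integrands are nonnegative), and using the cheap-control bound gives
\[
\|v(t_2)-\bar v\|_N^2\ \le\ \|v(t_1)-\bar v\|_N^2
+\varepsilon\!\int_{t_1}^{T}\!\|v-\bar v\|_N^2\,dt+\varepsilon^{-1}\!\int_{t_1}^{T}\!\|f\|_N^2\,dt .
\]
Choosing $\varepsilon=\lambda^{-1/2}$, so that $\varepsilon\|v-\bar v\|_N^2+\varepsilon^{-1}\|f\|_N^2=\lambda^{-1/2}\big(\|v-\bar v\|_N^2+\lambda\|f\|_N^2\big)$, the last two terms are at most $\lambda^{-1/2}\cdot\sqrt{\lambda}\,\|v(t_1)-\bar v\|_N^2=\|v(t_1)-\bar v\|_N^2$, hence $\|v(t_2)-\bar v\|_N^2\le 2\,\|v(t_1)-\bar v\|_N^2$. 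Since $2\le 1+\lambda^{-1/2}$ for $\lambda\le1$ and $2\le 1+\lambda^{1/2}$ for $\lambda>1$, the claim follows with the stated $C_\lambda$; in the regime $\lambda>1$ one may instead take $\varepsilon=1$ and use $\|f\|_N^2\le\lambda\|f\|_N^2$ to get $1+\lambda^{1/2}$ with no slack. I do not expect a real obstacle here: the substantive work --- constructing a stabilizing feedback to obtain the cheap-control property --- is already done, and what remains is to transport that estimate forward through the dynamics, which is why a differential inequality (not a pure comparison of costs, which would fail when $t_2-t_1$ is small) is used. The two points demanding care are using the symmetry of $\Psi$ to \emph{discard} rather than crudely bound the interaction term, and noting that $\|v(\cdot)-\bar v\|_N^2$ is absolutely continuous --- true because $v$ solves \eqref{particle-eq} with $f\in\mathcal{F}$ --- so that the fundamental theorem of calculus applies.
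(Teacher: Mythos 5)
Your proposal is correct and follows essentially the same route as the paper: differentiate $\|v(t)-\bar v\|_N^2$ along the optimal trajectory, drop the interaction term via the symmetry of $\Psi$, bound the control term by Young's inequality, and close with the cheap-control estimate from Lemma~\ref{lemma1.1} applied at $a=t_1$. The only (cosmetic) difference is that you choose the Young parameter $\varepsilon=\lambda^{-1/2}$ uniformly, which yields the constant $2\le C_\lambda$, whereas the paper takes $\varepsilon=1$ and splits the cases $\lambda\le 1$ and $\lambda>1$ to obtain $C_\lambda$ directly.
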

\begin{proof}
    Multiplying $(v_i-\bar{v})$ on the left of the second equation yields
    \begin{align*}
    \frac{d\|v-\bar{v}\|_N^2}{dt}
    =&~ \frac{2}{N^2}\sum_{i,j=1}^N \Psi(|x_i-x_j|) \langle {v}_i-\bar{v}, {v}_j-{v}_i\rangle +\frac{2}{N}\sum_{j=1}^N \langle {v}_i-\bar{v}, f_i \rangle \\
    \leq &~\frac{2}{N}\sum_{j=1}^N \langle {v}_i-\bar{v}, f_i \rangle \\[1mm]
    \leq &~ \|v-\bar{v}\|_N^2 +  \|f\|_N^2.
\end{align*}
When $\lambda\leq 1$, we integrate from $t_1$ to $t_2$ and get
\begin{align*}
    \|v(t_2)-\bar{v}\|_N^2 - \|v(t_1)-\bar{v}\|_N^2
    \leq&~ \frac{1}{\lambda} \int_{t_1}^{t_2} \|v(t)-\bar{v}\|_N^2 + \lambda \|f\|^2 dt \leq \frac{1}{\sqrt{\lambda}} \|v(t_1)-\bar{v}\|_N^2.
\end{align*}
Notice that the last inequality follows from the cheap control property. As $\lambda>1$, we get
\begin{align*}
    \|v(t_2)-\bar{v}\|_N^2 - \|v(t_1)-\bar{v}\|_N^2
    \leq& \int_{t_1}^{t_2} \|v(t)-\bar{v}\|_N^2 + \lambda \|f\|^2 dt \leq \sqrt{\lambda} \|v(t_1)-\bar{v}\|_N^2.
\end{align*}
This completes the proof.
\end{proof}

Based on this lemma and Lemma \ref{lemma4.1}, we can prove the exponential turnpike property for the particle system, which corresponds to case (1) in Theorem \ref{thm14} 

\subsection{Proof of the theorem for the mean-field problem}\label{Section4.2}

Since $\mu_0$ has bounded second-order moment, we can take the empirical measure at $t=0$ such that $\|v(0)-\bar{v}\|_N$ is uniformly bounded.
From the estimate \eqref{expo-particle}, we know that 
$$
\int_{\mathbb{R}^d} |v-\bar{v}|^2 ~d\mu_N(t,x,v) = \|v(t)-\bar{v}\|_N^2
$$
is uniformly bounded with respect to $N$. Then we recall Theorem \ref{thm21} and obtain the convergence in the Wasserstein distance $\mathcal{W}_2$. In this way, we take $N\rightarrow \infty$ in \eqref{expo-particle} and obtain the exponential turnpike estimate in the mean field level.

Alternatively, the result on the mean field problem can be also proven by a direct estimate of \eqref{mean-field-pb}. 
Namely, one may use a similar argument as that in Lemma \ref{lemma42} to prove
\begin{eqnarray*}
\begin{aligned}
    &\int |v-\bar{v}|^2 d\mu(t_2,x,v) \leq C_{\lambda} \int |v-\bar{v}|^2 d\mu(t_1,x,v),\qquad \forall~0\leq t_1 \leq t_2 \leq T.
\end{aligned}
\end{eqnarray*}
Here $\mu(t,x,v)$ represent the optimal solution to the problem \eqref{mean-field-pb}. 

\subsection{Proof of the theorem for the hydrodynamic equations}\label{section4.3}
Firstly, we consider the optimal control problem for the pressure-less Euler equation.
\begin{lemma}
For the optimal solution $(\rho,u)$ of the optimal control problem \eqref{hydro1-pb1}---\eqref{hydro1-pb4}, we have 
$$
\int_{\mathbb{R}^D} \rho(t_2,x) |u(t_2,x)-\bar{v}|^2 dx\leq C_\lambda \int_{\mathbb{R}^D} \rho(t_1,x) |u(t_1,x)-\bar{v}|^2 dx.
$$
Here $C_\lambda$ is a positive constant given in Lemma \ref{lemma42}.
\end{lemma}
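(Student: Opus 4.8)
The plan is to transcribe the proof of Lemma~\ref{lemma42} to the hydrodynamic setting, with the differential inequality for $\|v-\bar v\|_N^2$ there replaced by the integrated energy estimate of Lemma~\ref{lemma31}, and with the cheap control bound of Lemma~\ref{lemma3.4} playing the role that Lemma~\ref{lemma1.1} plays for the particle system. Write $\mathcal{E}(t)=\tfrac12\int_{\mathbb{R}^D}\rho(t,x)|u(t,x)-\bar v|^2\,dx$, so that the claimed inequality is equivalent to $\mathcal{E}(t_2)\le C_\lambda\,\mathcal{E}(t_1)$.

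First I would apply Lemma~\ref{lemma31} to the optimal pair $(\rho,u)$ with optimal control $f_H$ on the interval $[t_1,t_2]$, which gives
$$\mathcal{E}(t_2)-\mathcal{E}(t_1)\ \le\ \int_{t_1}^{t_2}\!\!\int_{\mathbb{R}^D}\rho\,(u-\bar v)\cdot f_H\,dx\,dt.$$
Since $\rho\ge 0$, Young's inequality yields $\rho\,(u-\bar v)\cdot f_H\le\tfrac12\rho|u-\bar v|^2+\tfrac12\rho|f_H|^2$ pointwise, hence
$$\mathcal{E}(t_2)-\mathcal{E}(t_1)\ \le\ \tfrac12\int_{t_1}^{t_2}\!\!\int_{\mathbb{R}^D}\bigl(\rho|u-\bar v|^2+\rho|f_H|^2\bigr)\,dx\,dt.$$
Because the integrand is nonnegative, the interval $[t_1,t_2]$ may be enlarged to $[t_1,T]$, and the resulting quantity is controlled by the cheap control estimate of Lemma~\ref{lemma3.4} applied with $a=t_1$ (here $f_H|_{[t_1,T]}$ is optimal for the subproblem started at $t_1$, exactly as used in the proof of that lemma), which bounds $\int_{t_1}^{T}\!\int_{\mathbb{R}^D}(\rho|u-\bar v|^2+\lambda\rho|f_H|^2)\,dx\,dt$ by $2\sqrt\lambda\,\mathcal{E}(t_1)$.

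It then remains to distribute the weight $\lambda$ correctly, and here I would split into the same two cases as in Lemma~\ref{lemma42}. If $\lambda\le 1$, then $\rho|u-\bar v|^2+\rho|f_H|^2\le\lambda^{-1}\bigl(\rho|u-\bar v|^2+\lambda\rho|f_H|^2\bigr)$, so that $\mathcal{E}(t_2)-\mathcal{E}(t_1)\le\tfrac{1}{2\lambda}\cdot 2\sqrt\lambda\,\mathcal{E}(t_1)=\lambda^{-1/2}\mathcal{E}(t_1)$, i.e.\ $\mathcal{E}(t_2)\le(1+\lambda^{-1/2})\mathcal{E}(t_1)$. If $\lambda>1$, then $\rho|u-\bar v|^2+\rho|f_H|^2\le\rho|u-\bar v|^2+\lambda\rho|f_H|^2$ directly, so $\mathcal{E}(t_2)-\mathcal{E}(t_1)\le\tfrac12\cdot 2\sqrt\lambda\,\mathcal{E}(t_1)=\sqrt\lambda\,\mathcal{E}(t_1)$, i.e.\ $\mathcal{E}(t_2)\le(1+\lambda^{1/2})\mathcal{E}(t_1)$. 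In both cases the constant matches the $C_\lambda$ of Lemma~\ref{lemma42}, and multiplying through by $2$ recovers the stated inequality for $\int_{\mathbb{R}^D}\rho|u-\bar v|^2\,dx$. Combined with the cheap control property (Lemma~\ref{lemma3.4}) and Lemma~\ref{lemma4.1}, this lemma then gives case (3a) of Theorem~\ref{thm14}.

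I do not anticipate a genuine obstacle here: the estimate is the macroscopic counterpart of Lemma~\ref{lemma42}, and all the analytic work — the energy identity, the non-positivity of the $Q_1$ contribution, the cheap control bound, and the assumed regularity and compact support of the optimal solution needed to justify the weak formulations — has already been carried out in Lemmas~\ref{lemma31} and \ref{lemma3.4}. The only points requiring a little care are keeping track of the factor $2$ between $\mathcal{E}(t)$ and $\int_{\mathbb{R}^D}\rho|u-\bar v|^2\,dx$, and noting that Lemma~\ref{lemma3.4} is stated for an arbitrary starting time $a\in[0,T]$ so that it may legitimately be invoked with $a=t_1$.
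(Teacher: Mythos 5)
Your proposal is correct and follows essentially the same route as the paper: apply Lemma \ref{lemma31} to the optimal pair on $[t_1,t_2]$, bound the control term by Young's inequality with the same $\lambda\le 1$ / $\lambda>1$ case split, and close the estimate with the cheap control property of Lemma \ref{lemma3.4} at $a=t_1$, yielding the constant $C_\lambda$ of Lemma \ref{lemma42}. The only cosmetic differences are your normalization via $\mathcal{E}(t)=\tfrac12\int\rho|u-\bar v|^2\,dx$ and the explicit remark that the nonnegative integrand allows enlarging $[t_1,t_2]$ to $[t_1,T]$, both of which are implicit in the paper's argument.
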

\begin{proof}
    For the optimal solution $(\rho,u)$ with the optimal control $f_H$, we use Lemma \ref{lemma31} to obtain
    $$
    \int_{\mathbb{R}^D} \rho(t_2,x) |u(t_2,x)-\bar{v}|^2 dx \leq \int_{\mathbb{R}^D} \rho(t_1,x) |u(t_1,x)-\bar{v}|^2 dx + 2 \int_{t_1}^{t_2}\int_{\mathbb{R}^D} \rho (u-\bar{v})\cdot f_H dxdt.
    $$
The last term can be bounded by
    \begin{align*}
        2 \int_{t_1}^{t_2}\int_{\mathbb{R}^D} \rho (u-\bar{v})\cdot f_H~ dxdt &\leq 
        \left\{\begin{array}{lc}
        \lambda^{-1}\int_{t_1}^{t_2}\int_{\mathbb{R}^D} \rho |u-\bar{v}|^2 + \lambda \rho|f_H|^2 dxdt &\quad  \lambda\leq 1\\[5mm]
        \int_{t_1}^{t_2}\int_{\mathbb{R}^D} \rho |u-\bar{v}|^2 + \lambda \rho|f_H|^2 dxdt & \quad \lambda>1.
        \end{array}\right.
    \end{align*}
Moreover, thanks to the cheap control property, we obtain the estimate 
$$
\int_{\mathbb{R}^D} \rho(t_2,x) |u(t_2,x)-\bar{v}|^2 dx \leq C_\lambda \int_{\mathbb{R}^D} \rho(t_1,x) |u(t_1,x)-\bar{v}|^2 dx
$$
and complete the proof of lemma.
\end{proof}

Based on this lemma and Lemma \ref{lemma3.4}, we obtain the exponential turnpike property in \eqref{eq4.3} according to the result in Lemma \ref{lemma4.1}. This completes the proof of case (3a) in Theorem \ref{thm14}.

At last, we consider the optimal control problem for the full Euler equation and give
\begin{lemma}
For the optimal solution $(\rho,u,E)$ of the optimal control problem \eqref{hydro2-pb1}---\eqref{hydro2-pb5}, we have 
\begin{align*}
&\int_{\mathbb{R}^D} \rho(t_2,x) |u(t_2,x)-\bar{v}|^2 + 2\rho(t_2,x)e(t_2,x)dx \\[2mm]
\leq&~ C_\lambda \int_{\mathbb{R}^D} \rho(t_1,x) |u(t_1,x)-\bar{v}|^2 + 2\rho(t_1,x)e(t_1,x)dx.
\end{align*}
Here $C_\lambda$ is a positive constant given in Lemma \ref{lemma42}.
\end{lemma}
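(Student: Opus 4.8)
The plan is to reproduce, at the level of the full Euler system, the two–step argument already used for the pressure-less case: first bound the increment of the energy $\mathcal{H}(t)=\int_{\mathbb{R}^D}\bigl(\rho|u-\bar{v}|^2+2\rho e\bigr)dx$ by the running cost via Lemma \ref{lemma3.5}, and then control that running cost by the data at $t_1$ via the cheap control property of Lemma \ref{lemma3.6}. Concretely, applying Lemma \ref{lemma3.5} on $[t_1,t_2]$ along the optimal trajectory $(\rho,u,E)$ with the optimal controls $(F_1,F_2)$ and multiplying by $2$ gives
\[
\mathcal{H}(t_2)-\mathcal{H}(t_1)\le 2\int_{t_1}^{t_2}\!\!\int_{\mathbb{R}^D}\rho\,(F_2-\bar{v}\cdot F_1)\,dxdt .
\]
The point is then to estimate the right-hand side by the integrand of the cost functional \eqref{hydro2-pb1}, so that Lemma \ref{lemma3.6} applies.

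For that I would split the source term into the two combinations that appear in the modified objective $G$ defined in \eqref{def:G}, namely $F_2-\bar{v}\cdot F_1=(F_2-u\cdot F_1)+(u-\bar{v})\cdot F_1$, and apply Young's inequality with weight $1$ to each piece: using $2ab\le a^2+b^2$ with $a=\sqrt{\rho}\,|u-\bar{v}|,\ b=\sqrt{\rho}\,|F_1|$ on the second term, and with $a=\sqrt{2\rho e},\ b=\sqrt{\rho/(2e)}\,|F_2-u\cdot F_1|$ on the first term (here the pointwise positivity of $e$ assumed in Section \ref{section2.3.2} is what makes the splitting legitimate). Adding these yields the pointwise bound
\[
2\rho(F_2-\bar{v}\cdot F_1)\le \rho|u-\bar{v}|^2+2\rho e+G(F_1,F_2)\le \max\{1,\lambda^{-1}\}\bigl(\rho|u-\bar{v}|^2+2\rho e+\lambda\,G(F_1,F_2)\bigr).
\]
Integrating in $x$ and over $[t_1,t_2]$, enlarging the time interval to $[t_1,T]$ by nonnegativity of the integrand, and invoking Lemma \ref{lemma3.6} with $a=t_1$ to bound $\int_{t_1}^{T}\!\!\int_{\mathbb{R}^D}\bigl(\rho|u-\bar{v}|^2+2\rho e+\lambda G\bigr)dxdt$ by $\sqrt{\lambda}\,\mathcal{H}(t_1)$, one obtains $\mathcal{H}(t_2)\le\bigl(1+\max\{1,\lambda^{-1}\}\sqrt{\lambda}\bigr)\mathcal{H}(t_1)$. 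Since $1+\max\{1,\lambda^{-1}\}\sqrt{\lambda}$ equals $1+\lambda^{-1/2}$ for $\lambda\le1$ and $1+\lambda^{1/2}$ for $\lambda>1$, this is exactly the constant $C_\lambda$ of Lemma \ref{lemma42}, which proves the lemma. Feeding this lemma (assumption (2)) together with Lemma \ref{lemma3.6} (assumption (1)) into Lemma \ref{lemma4.1} applied to $\mathcal{H}(t)$ then yields the exponential turnpike estimate \eqref{eq:4.4}, completing case (3b) of Theorem \ref{thm14}.

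I expect the only real subtlety to be the algebraic bookkeeping around the coupled control cost: unlike the pressure-less case, where the control enters the cost through the single term $\lambda\rho|f_H|^2$ paired directly with $\rho(u-\bar{v})\cdot f_H$, here the cross term $\rho(F_2-\bar{v}\cdot F_1)$ mixes $F_1$ and $F_2$ and must be split so that each part is absorbed by one of the two quadratic blocks of $G$. The thing to verify carefully is that the single Young weight $1$ works simultaneously in both regimes $\lambda\le1$ and $\lambda>1$, so that the constant reproduces $C_\lambda$ without any case distinction in the splitting itself; one must also keep the factor of $2$ straight between the ``$\tfrac12\rho|u-\bar{v}|^2+\rho e$'' normalization of Lemma \ref{lemma3.5} and the ``$\rho|u-\bar{v}|^2+2\rho e$'' appearing in the statement.
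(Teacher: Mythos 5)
Your proposal is correct and follows essentially the same route as the paper: apply Lemma \ref{lemma3.5} on $[t_1,t_2]$, split $F_2-\bar v\cdot F_1=(F_2-u\cdot F_1)+(u-\bar v)\cdot F_1$, absorb each piece via Young's inequality into $\rho|u-\bar v|^2+2\rho e+G(F_1,F_2)$, compare $G$ with $\lambda G$ through the factor $\max\{1,\lambda^{-1}\}$ (the paper writes this as the two cases $\lambda\le 1$ and $\lambda>1$), and then invoke the cheap control property of Lemma \ref{lemma3.6} with $a=t_1$ to recover exactly the constant $C_\lambda$ of Lemma \ref{lemma42}. Your explicit remark about extending the time integral from $[t_1,t_2]$ to $[t_1,T]$ by nonnegativity of the integrand is a detail the paper leaves implicit, but otherwise the arguments coincide.
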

\begin{proof}
    For the optimal solution $(\rho,u,E)$ with the optimal control $F_1$ and $F_2$, we use Lemma \ref{lemma3.5} to obtain
    \begin{align*}
    \int_{\mathbb{R}^D} \rho|u-\bar{v}|^2 + 2 \rho e ~dx\Big|_{t=t_1}^{t=t_2} \leq 2\int_{t_1}^{t_2}\int_{\mathbb{R}^D}\rho (F_2 - \bar{v}\cdot F_1)~dxdt.
\end{align*}
The right hand side can be bounded by
\begin{align*}
&~2 \int_{t_1}^{t_2}\int_{\mathbb{R}^D} \rho (F_2 - \bar{v}\cdot F_1)~ dxdt = 
2 \int_{t_1}^{t_2}\int_{\mathbb{R}^D} \rho \Big(F_2 - u\cdot F_1 + (u-\bar{v})\cdot F_1\Big)~ dxdt\\[2mm]
\leq &~
\int_{t_1}^{t_2}\int_{\mathbb{R}^D} 2\rho e + \frac{\rho}{2e}|F_2 - u\cdot F_1|^2 + \rho|u-\bar{v}|^2 + \rho |F_1|^2~ dxdt.
\end{align*}
Then we have for $\lambda>1$
\begin{align*}
2 \int_{t_1}^{t_2}\int_{\mathbb{R}^D} \rho (F_2 - \bar{v}\cdot F_1)~ dxdt \leq \int_{t_1}^{t_2}\int_{\mathbb{R}^D} 2\rho e +  \rho|u-\bar{v}|^2 + \lambda G(F_1,F_2)~ dxdt
\end{align*}
and for $\lambda\leq 1$
\begin{align*}
2 \int_{t_1}^{t_2}\int_{\mathbb{R}^D} \rho (F_2 - \bar{v}\cdot F_1)~ dxdt \leq \lambda^{-1}\int_{t_1}^{t_2}\int_{\mathbb{R}^D} 2\rho e +  \rho|u-\bar{v}|^2 + \lambda G(F_1,F_2)~ dxdt.
\end{align*}
We use the cheap control property in Lemma \ref{lemma3.6} to obtain the estimate stated in the lemma.
\end{proof}

Consequently, we combine this lemma with Lemma \ref{lemma3.6} and Lemma \ref{lemma4.1} to conclude the exponential turnpike property in \eqref{eq:4.4} and complete the proof of case (3b) in Theorem \ref{thm14}.

\section{Numerical results of the feedback control}\label{Section5}

In the proof of the cheap control property, we design the feedback control for each problem separately. In applications, the proper feedback law itself is of wide interest. Therefore, in this subsection, we show some numerical results for the particle system and the hydrodynamic equations under the feedback control.

We firstly show the results for the particle system. Specifically, we compute the equation \eqref{particle-eq} with $N=30$ and the interaction kernel
$$
\Psi(x,y)=\frac{1}{(1 + |x - y|^2)^\gamma},\qquad \gamma=1.
$$
Besides, the feedback control is given by $f_i=-\beta(v-\bar{v})$ with $\beta=2$ and $\bar{v}=0.5$. Initial data for $x_0$ and $v_0$ are randomly generated from the normal distributions centered at $\nu_x=1$ and $\nu_v=0$.
\begin{figure}[h]
    \centering
    \begin{subfigure}{}
        \includegraphics[width=1.9in]{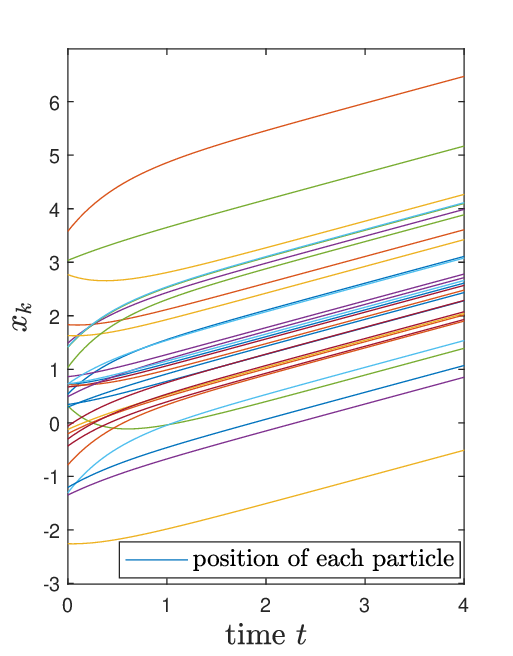}
    \end{subfigure}
    \begin{subfigure}{}
        \includegraphics[width=1.9in]{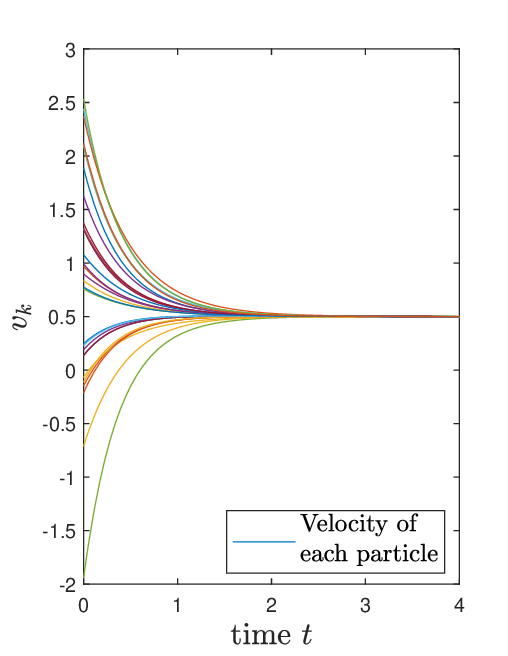}
    \end{subfigure}
    \begin{subfigure}{}
        \includegraphics[width=1.95in]{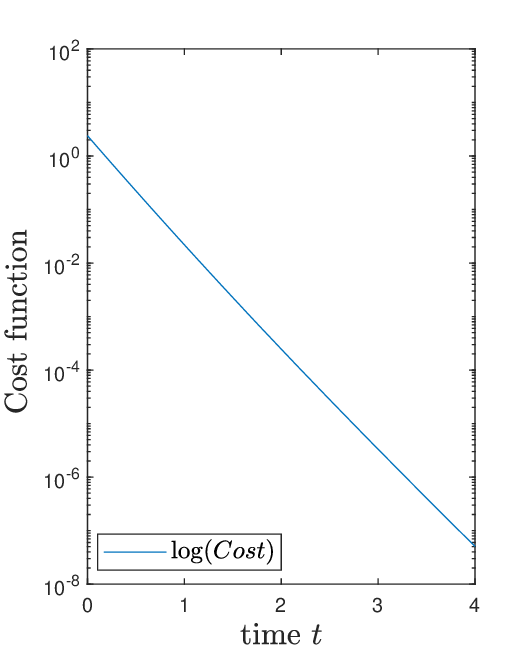}
    \end{subfigure}
    \caption{Numerical results for the particle system: (left) position $x_k$ of each particle; (middle) velocity $v_k$ of each particle; (right) the decay of the cost, represented in the logarithmic regime.}
    \label{fig1}
\end{figure}

In Figure \ref{fig1}, we plot the time evolutions of position $x_i=x_i(t)$ and velocity $v_i=v_i(t)$ for each particle.
Moreover, we plot the evolution of the cost $\|v(t)-\bar{v}\|^2_N + \lambda \|f_i(t)\|^2_N$, which is represented in the logarithmic regime. Here $\lambda=0.25$. From these figures, it is easy to observe that each velocity $v_k(t)$ tends to the goal $\bar{v}=0.5$ as time evolves. Besides, the cost $\|v(t)-\bar{v}\|^2_N + \lambda \|f_i(t)\|^2_N$ decays exponentially fast by using the feedback control.

Next we numerically compute the hydrodynamic equations with the feedback control. For the pressure-less case, we consider the equations \eqref{hydro1-pb2}---\eqref{hydro1-pb4} with the initial data 
$$
\rho(0,x) = 0.1,\qquad u(0,x)= \exp\left(-2{x^2}\right).
$$
The interaction kernel $\Psi(x,y)$ is taken to be the same as in the previous example. The feedback control is taken as 
$$
f_H(t,x)=-\beta\rho(t,x)(u(t,x)-\bar{v})
$$ 
with $\beta=2$ and $\bar{v}=-0.1$. In Figure \ref{fig2}, we show the solutions at $t=0$, $t=0.5$, and $t=4$ from left to right. From these, we observe that the velocity $u(t,x)$ tends to the constant $\bar{v}=-0.1$ for all $x$ as time evolves. Moreover, we plot in Figure \ref{fig2-1} the evolution of the cost $\rho|u-\bar{v}|^2+\gamma|f_H|^2$ with feedback law, which is represented in the logarithmic regime. Here $\lambda=1$. We observe that the cost decays exponentially fast by using the feedback control.
\begin{figure}[H]
    \centering
    \begin{subfigure}{}
        \includegraphics[width=1.9in]{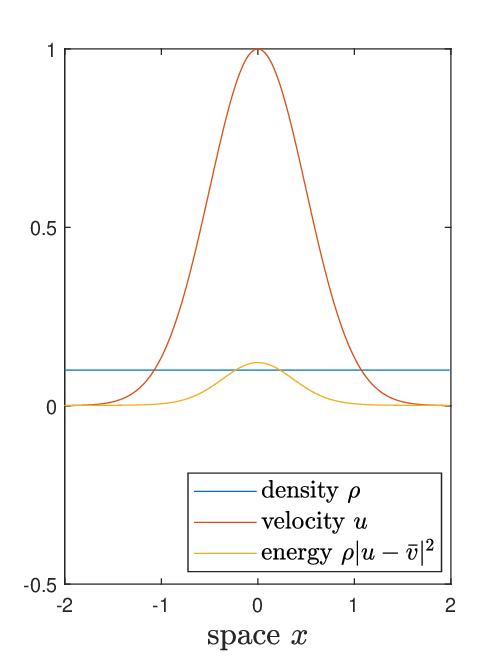}
    \end{subfigure}
    \begin{subfigure}{}
        \includegraphics[width=1.9in]{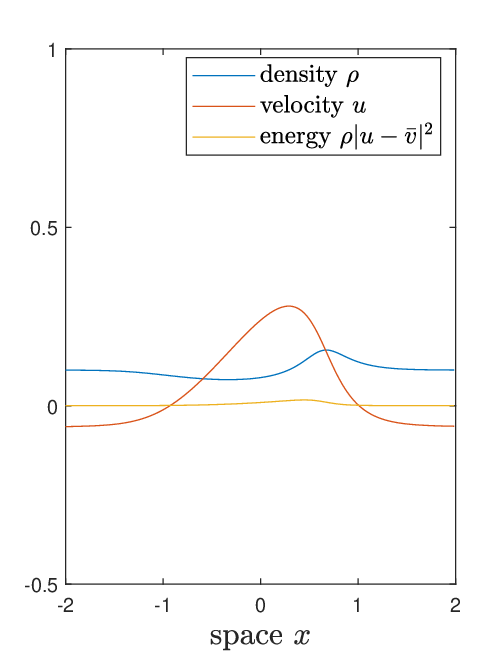}
    \end{subfigure}
    \begin{subfigure}{}
        \includegraphics[width=1.9in]{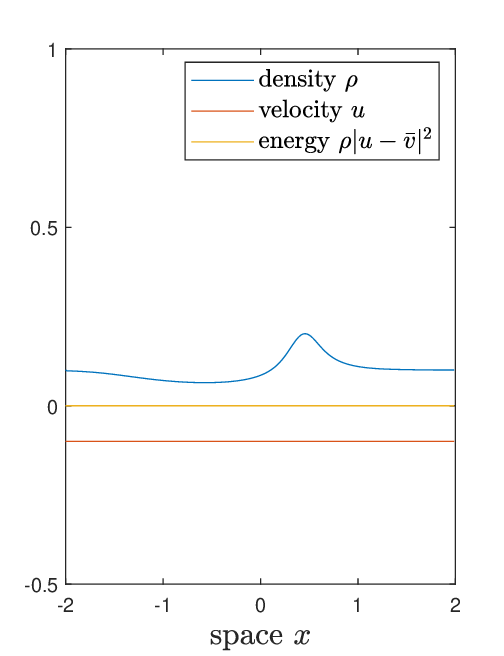}
    \end{subfigure}
    \caption{Numerical results for the pressure-less hydrodynamic equation: (left) the initial data at $t=0$; (middle) the solution at $t=0.5$; (right) the solution at $t=4$.}
    \label{fig2}
\end{figure}

\begin{figure}[H]
    \centering
    \includegraphics[width=2.5in]{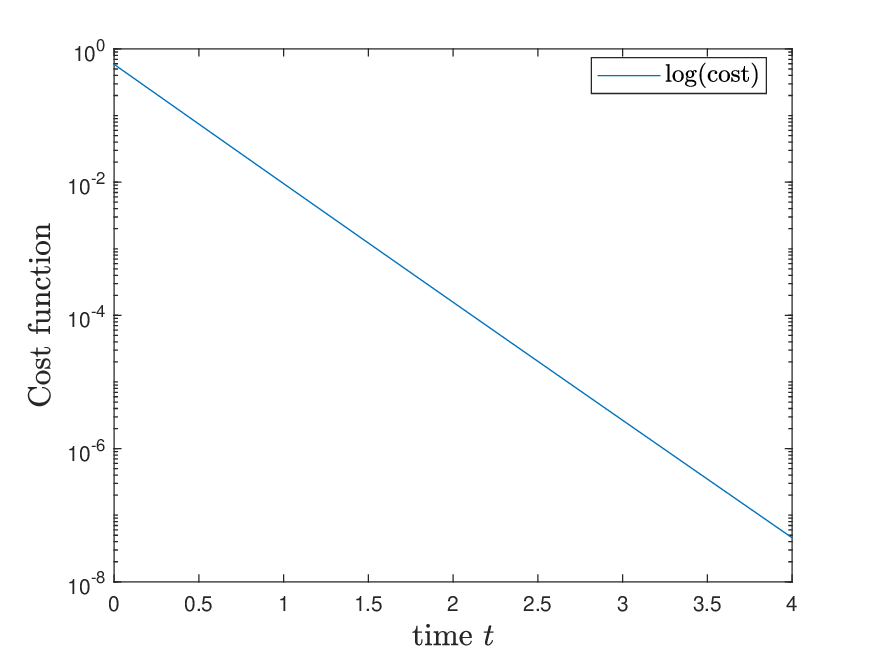}
    \caption{The pressure-less hydrodynamic equation: the decay of the cost function $\rho|u-\bar{v}|^2+\gamma|f_H|^2$ with feedback law, represented in the logarithmic regime.}
    \label{fig2-1}
\end{figure}

The last example of this subsection is the computation of the full Euler equation with the feedback control. We numerically compute the hydrodynamic equations \eqref{eq3:rho}---\eqref{hydro2-pb5} with the initial data 
$$
\rho(0,x) = 0.1,\qquad u(0,x)= \exp\left(-2{x^2}\right),\qquad p(0,x)=0.01.
$$
The interaction kernel $\Psi(x,y)$ is given to be the same as in the first example. The feedback control is taken as 
$$
{F}_1=-\beta({u}-\bar{v}),\qquad {F}_2=-\beta[2{e}+{u}\cdot({u}-\bar{v})]
$$ 
with $\beta=2$ and $\bar{v}=0.1$. In Figure \ref{fig2}, we show the solutions at $t=0$, $t=0.5$, and $t=4$ from left to right. These results indicate that the velocity $u(t,x)$ converges to the constant $\bar{v}=0.1$ for all $x$ as time evolves. Similarly, we plot the evolution of the cost $\rho|u-\bar{v}|^2+2\rho e+\gamma G(F_1,F_2)$ with the feedback law and $\lambda=1$. Clearly, the cost also decays exponentially fast with this feedback control.
\begin{figure}[H]
    \centering
    \begin{subfigure}{}
        \includegraphics[width=1.9in]{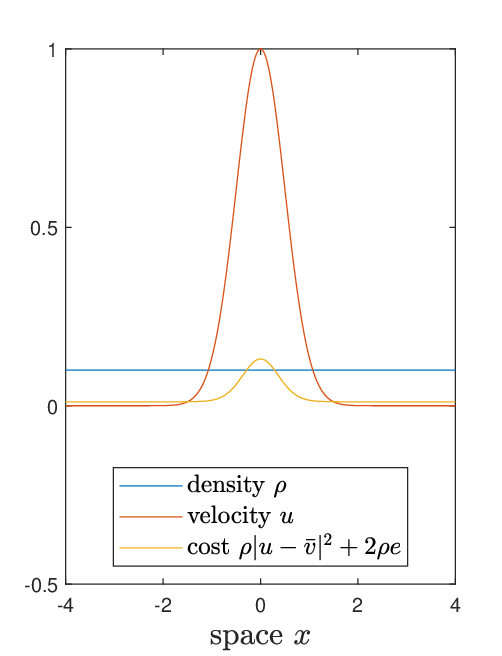}
    \end{subfigure}
    \begin{subfigure}{}
        \includegraphics[width=1.9in]{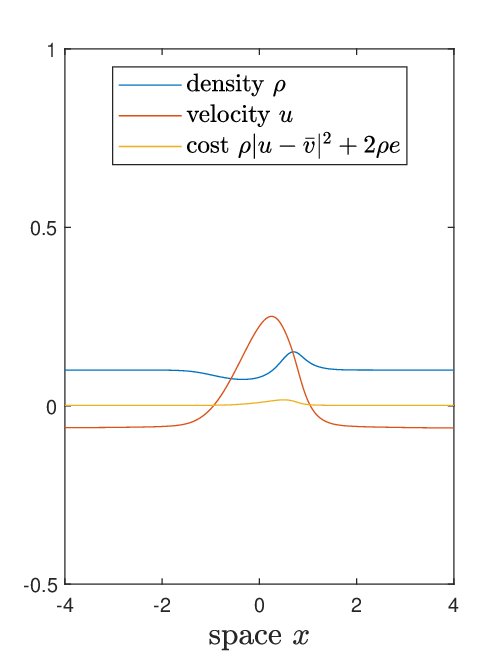}
    \end{subfigure}
    \begin{subfigure}{}
        \includegraphics[width=1.9in]{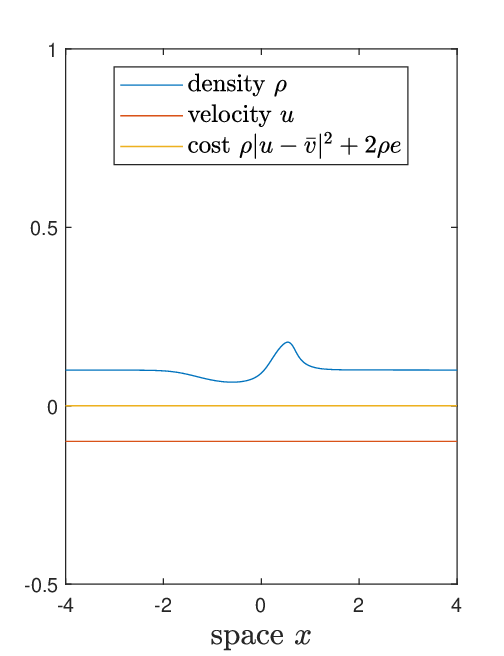}
    \end{subfigure}
    \caption{Numerical results for the Euler equation: (left) the initial data at $t=0$; (middle) the solution at $t=0.5$; (right) the solution at $t=4$.}
    \label{fig3}
\end{figure}

\begin{figure}[H]
    \centering
    \includegraphics[width=2.5in]{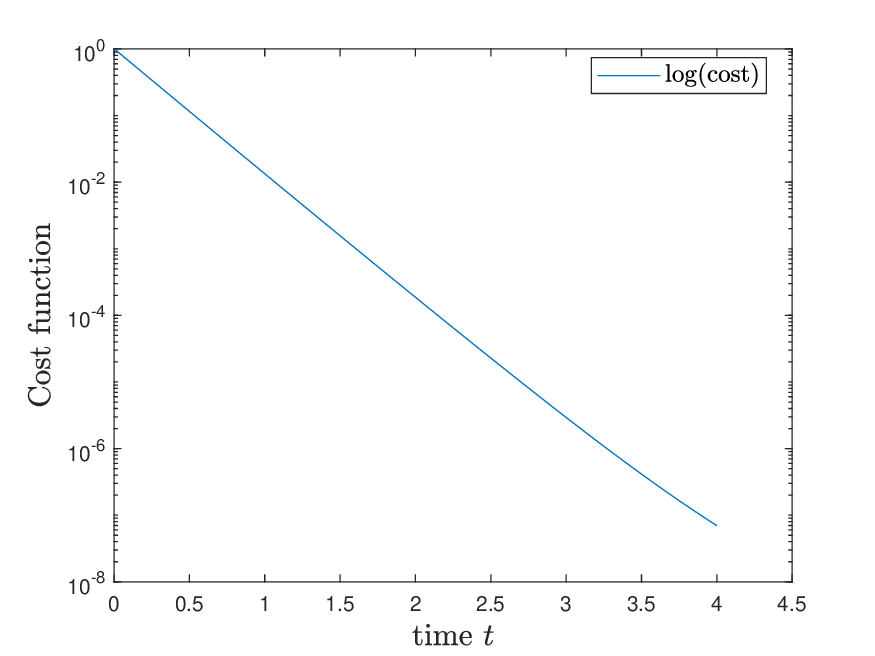}
    \caption{The Euler equation: the decay of the cost function $\rho|u-\bar{v}|^2+2\rho e+\gamma G(F_1,F_2)$ with feedback law, represented in the logarithmic regime.}
    \label{fig3-1}
\end{figure}

\section{Conclusion}

Starting from the optimal control problems of particle systems, we formally formulate optimal control problems at the kinetic and hydrodynamic level. Assuming the existence of solutions with certain regularity, we establish the exponential turnpike property for this hierarchy of optimal control problems, showing that the optimal trajectories remain exponentially close to the corresponding steady states over long time horizons.
This work extends our previous result \cite{Herty_Zhou_2025} to the framework of second-order particle systems and, for the first time, establishes the exponential turnpike property for hydrodynamic models. In particular, the results confirm that the exponential turnpike behavior is preserved throughout the entire hierarchy of optimal control problems, consistently bridging the microscopic, kinetic, and macroscopic levels.

\section*{\normalsize{Acknowledgments}} 
The authors thank the Deutsche Forschungsgemeinschaft (DFG, German Research Foundation) for the financial support through HE5386/33-1 Control of Interacting Particle Systems, and Their Mean-Field, and Fluid-Dynamic Limits (560288187) and through 442047500/SFB1481 within the projects B04 (Sparsity fördernde Muster in kinetischen Hierarchien), B05 (Sparsifizierung zeitabhängiger Netzwerkflußprobleme mittels diskreter Optimierung) and B06 (Kinetische Theorie trifft algebraische Systemtheorie). The second author is funded by Alexander von Humboldt Foundation (Humboldt Research Fellowship Programme for Postdocs).

\section*{\normalsize{Competing interests}} 
The authors declare none.

\begin{appendices}
    \section{Missing proof}\label{AppendixA}
    \begin{proof}[Proof of Lemma \ref{lemma4.1}]
For a fixed constant $\tau$, there exists a point $t_1\in[0,\tau]$ such that
\begin{align*}
    \mathcal{E}(t_1) \leq&~ \frac{1}{\tau} \int_{0}^{\tau}
    \mathcal{E}(t) dt 
    \leq \frac{C_0}{\tau}\mathcal{E}(0).
\end{align*}
For any $t\geq \tau \geq t_1$, we obtain 
\begin{align*}
    \mathcal{E}(t)
    \leq C_1\mathcal{E}(t_1)
    \leq \frac{C_1C_0}{\tau} \mathcal{E}(0).
\end{align*}
Then we prove the following inequality for $t \in [n\tau,T]$ by induction:
\begin{align}\label{proof-step1}
\mathcal{E}(t) \leq \left(\frac{C_0C_1}{\tau}\right)^n \mathcal{E}(0).
\end{align} 
Suppose the inequality holds for $n\geq 1$. There exists $t_n\in[n\tau,(n+1)\tau]$ such that
\begin{align*}
    \mathcal{E}(t_n) \leq&~ \frac{1}{\tau} \int_{n\tau}^{(n+1)\tau} \mathcal{E}(t) dt    \leq \frac{C_0}{\tau} \mathcal{E}(n\tau) \leq \frac{C_0}{\tau} \left(\frac{C_0C_1}{\tau}\right)^n \mathcal{E}(0) .
    \end{align*}
Thus for any $t\in[(n+1)\tau,T]$, we obtain 
$$
\mathcal{E}(t) \leq C_1 \mathcal{E}(t_n) \leq \left(\frac{C_0C_1}{\tau}\right)^{n+1} \mathcal{E}(0)
$$ 
and this completes the proof of \eqref{proof-step1}.
 
 Now we fix the constant $\tau$ in \eqref{proof-step1} such that $\tau>C_0C_1$. Next, we discuss the cases $t\in(0,\tau)$ and $t\in[\tau,T)$ separately. 
    
    For any $t\in[\tau,T)$, we take the integer $n=\lfloor t/\tau \rfloor$. Then,  $1\leq n \leq \frac{T}{\tau}$  and $t\in [n\tau,T)$
    and we obtain by \eqref{proof-step1}:
\begin{align*}
    \mathcal{E}(t) \leq &~ \left(\frac{C_0C_1}{\tau}\right)^n \mathcal{E}(0).
\end{align*}
Due to the definition of $n$, we have  $n>t/\tau-1$. Also, the constant $\tau$ is chosen such that $\tau>C_0C_1$. Thus we have
\begin{align*}
    &~ \left(\frac{C_0C_1}{\tau}\right)^n = \left(\frac{\tau}{C_0C_1}\right)^{-n} 
    \leq \left(\frac{\tau}{C_0C_1}\right)^{1-t/\tau}.
\end{align*}
The exponential estimate is then given by
$$
    \mathcal{E}(t) \leq C_2 e^{-\alpha t} \mathcal{E}(0),\quad \forall ~t\in[\tau, T)
$$
with 
$$
C_2 = \frac{\tau}{C_0C_1},\qquad \alpha = \frac{1}{\tau}\log\left(\frac{\tau}{C_0C_1}\right)>0.
$$

On the other hand, for $t\in(0,\tau)$, we have
$$
C_2 e^{-\alpha t} \geq  C_2 e^{-\alpha \tau} = 1.
$$
By assumption (2), we have 
\begin{equation*}
\mathcal{E}(t) \leq C_1
 C_2 e^{-\alpha t} \mathcal{E}(0).
\end{equation*}
To combine the results of $t\in(0,\tau)$ and $t\in[\tau, T)$, we  take $C=C_1C_2$ and obtain
\begin{equation}\label{thm14-eq4}
\mathcal{E}(t)
\leq  
C e^{-\alpha t} \mathcal{E}(0),\quad \forall~t\in(0,T).
\end{equation}
This completes the proof of lemma.
\end{proof}
\end{appendices}


\end{document}